\documentclass[a4paper,11pt]{article}

\usepackage{amsmath, amsthm, amsfonts, amssymb}
\usepackage[all,cmtip]{xy}
\usepackage{enumerate}

\usepackage{tikz-cd} 

\usepackage{color} 
\usepackage{graphicx}
\usepackage{mathrsfs}

\numberwithin{equation}{section}

\usepackage[nosort,nocompress,noadjust]{cite}

\usepackage[bookmarks=false,hyperfootnotes=false,colorlinks,
    linkcolor={red!60!black},
    citecolor={blue!50!black},
    urlcolor={blue!80!black}]{hyperref}

\renewcommand{\eqref}[1]{\hyperref[#1]{(\ref{#1})}}

\newtheorem{letterthm}{Theorem}

\newtheorem{lettercor}[letterthm]{Corollary}

\newtheorem*{haagerup-theorem}{Relative Bicentralizer Theorem}

\newtheorem{thm}{Theorem}[section]

\newtheorem{lem}[thm]{Lemma}

\newtheorem{cor}[thm]{Corollary}
\newtheorem{prop}[thm]{Proposition}

\theoremstyle{definition}
\newtheorem{rem}[thm]{Remark}

\newtheorem{df}[thm]{Definition}

\newcommand{\R}{\mathbb{R}}
\newcommand{\C}{\mathbb{C}}
\newcommand{\B}{\mathbf{B}}

\newcommand{\Z}{\mathbb{Z}}

\newcommand{\N}{\mathbb{N}}

\newcommand{\cZ}{\mathcal{Z}}
\newcommand{\cU}{\mathcal{U}}

\newcommand{\Ad}{\operatorname{Ad}}

\newcommand{\Aut}{\mathord{\text{\rm Aut}}}

\newcommand{\UCP}{\mathord{\text{\rm UCP}}}

\newcommand{\rL}{\mathord{\text{\rm L}}}
\newcommand{\rB}{\mathord{\text{\rm B}}}

\newcommand{\rb}{\mathord{\text{\rm b}}}

\newcommand{\dom}{\mathord{\text{\rm dom}}}

\newcommand{\rd}{\mathord{\text{\rm d}}}

\newcommand{\supp}{\mathord{\text{\rm supp}}}

\newcommand{\ovt}{\mathbin{\overline{\otimes}}}

\newcommand{\ri}{\text{\rm i}}

\newcommand{\II}{{\rm II}}
\newcommand{\III}{{\rm III}}

\begin{document}

\begin{center}
{\boldmath\LARGE\bf Kadison's problem and ergodicity of the bicentralizer flow }

\bigskip

{\sc by Amine Marrakchi\footnote{UMPA, CNRS ENS de Lyon, Lyon (France). E-mail: amine.marrakchi@ens-lyon.fr}}
\end{center}

\begin{abstract}\noindent
We solve Kadison's problem on the existence of maximal abelian subalgebras in irreducible subfactors $N \subset M$ when $N$ is AFD or when $N$ is with expectation in $M$. In the latter case, we first show that the relative bicentralizer flow of a type $\III_1$ irreducible subfactor with expectation is always ergodic. This also implies that for every irreducible subfactor with expectation $N \subset M$, there exists an AFD subfactor with expectation $P \subset N$ that is irreducible in $M$.
\end{abstract}

\section*{Introduction}
Maximal abelian subalgebras occupy a central position in the theory of von Neumann algebras. The choice of a maximal abelian algebra encodes a particular way of "diagonalizing" the algebra. They serve as "coordinate systems" within the noncommutative environment, allowing one to leverage classical measure-theoretic techniques. This is best illustrated by Feldmann and Moore's theory of cartan subalgebras \cite{FM77}. Another example comes from the theory of unitary group representations : the choice of a maximal abelian subalgebra in the commutant of a representation encodes its decomposition into a measurable field of irreducible representations (see \cite[Section 1.G]{BH19} and the references therein).

In 1967, at the Baton Rouge conference \cite{Ka67}, Kadison asked the following question : \emph{``if $N$ is a subfactor of the factor
$M$ for which $N' \cap M$ consists of scalars, will some maximal abelian *-subalgebra of $N$ be a maximal abelian subalgebra of $M$?"}. A subfactor that satisfies $N' \cap M=\C$ is called \emph{irreducible} or \emph{ergodic} \cite{Po21} and this condition is clearly necessary for $N$ to contain a maximal abelian subalgebra of $M$.

In 1981, Popa solved Kadison's question affirmatively when $M$ is factor of type $\II_1$ \cite{Po81} by constructing the desired maximal abelian subalgebra as a limit of an increasing sequence of finite dimensional abelian subalgebras that are more and more ergodic. This was the starting point of a whole new method pioneered by Popa to construct subalgebras obeying various constraints as an inductive limit of finite dimensional algebras which satisfy more and more of the given conditions. This method was developped and refined over the years \cite{Po83, Po16, Po17, Po21} and led to numerous applications (we refer to Popa's blog \cite{Po25} for an exposition of this results).

In \cite{Po81}, the finite trace assumption on $M$ is crucial as it endows $M$ with a Hilbert space structure that is invariant under conjugation by unitaries in $N$, thus allowing the use of hilbertian averaging arguments. The same arguments still work if we only assume that $N$ is of type $\II$ and that there exists a normal conditional expectation from $M$ onto $N$. This condition is automatically satisfied if $M$ is of type $\II_1$ but it fails for example when $N$ is of type $\II_1$ and $M$ is of type $\II_\infty$. In fact, a counter-example to Kadison's question in the $\II_1 \subset \II_\infty$ case was found by Ge and Popa in \cite{GP98}. This raised the question of finding necessary or sufficient conditions for an ergodic subfactor $N \subset M$ to satisfy Kadison's property. Ge and Popa conjectured that the existence of a normal conditional expectation $E : M \rightarrow N$ might be a sufficient condition, without any restriction on the type of $M$ and $N$ (see the discussion
after \cite[Corollary 5.1]{GP98} and \cite[Problem 7.6]{Po21}).

In \cite{Ma25}, the author confirmed Ge and Popa's intuition by solving Kadison's problem for any subfactor with expectation $N \subset M$ provided that $N$ is not of type $\III_1$ or that $N$ is a factor of type $\III_1$ that satisfies Connes' bicentralizer conjecture \cite{Co85}. In this paper, we complete the solution to Ge and Popa's conjecture by dealing with the case where $N$ is an arbitrary factor of type $\III_1$. We also solve Kadison's question affirmatively in the case where $N$ is \emph{Approximately Finite Dimensional (AFD)} without assuming the existence of a normal conditional expectation from $M$ onto $N$. This is new even when $N$ is of type $\II_1$.

\begin{letterthm} \label{letter kadison}
Let $N \subset M$ be an inclusion of von Neumann algebras with separable preduals. Suppose that one of the following assumption holds :
\begin{enumerate}[(1)]
\item  $N$ is AFD.
\item There exists a faithful normal conditional expectation from $M$ onto $N$.
\end{enumerate}
 Then there exists a maximal abelian subalgebra $A \subset N$ such that $A' \cap M=A \vee (N' \cap M)$. In particular, if $N' \cap M \subset N$ then $A$ is maximal abelian in $M$.
\end{letterthm}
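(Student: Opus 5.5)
The plan is to reduce both cases to a single relative statement: there is an AFD subalgebra $P \subset N$, with expectation in $N$, such that $P' \cap M = N' \cap M$, and then to apply Popa's finite-dimensional inductive method to $P$. For the reduction, the key point is the following. If $P \subset N$ is AFD and $P' \cap M = N' \cap M$, then any maximal abelian $A \subset P$ with $A' \cap M = A \vee (P' \cap M)$ is also maximal abelian in $N$. Indeed, $A' \cap N \subset (A \vee (N'\cap M)) \cap N = A \vee (N' \cap M \cap N) = A \vee \mathcal Z(N)$, and $\mathcal Z(N) \subset \mathcal Z(P) \subset A$, because $\mathcal Z(N) \subset N' \cap M = P' \cap M$. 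So case (2) reduces to case (1) once we produce such a $P$. In case (1) we simply take $P = N$.

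\textbf{Case (1), $N$ AFD.} I would construct $A$ as the weak closure of an increasing union of finite-dimensional abelian subalgebras $A_n \subset N$. Write $N = (\bigcup_k N_k)''$ with $N_k$ finite-dimensional, and fix faithful normal states $\varphi$ on $M$ and a dense sequence $(x_j)$ in $M$. At step $n$ we refine $A_n$ to $A_{n+1}$ with two goals. First, $A_{n+1}$ should almost contain a masa of $N_n$ (built from $A_n \vee$ minimal projections of a finite-dimensional piece), which gives maximality in $N$. Second, the conditional-expectation-like averages $\sum_p p x_j p$ over the minimal projections $p$ of $A_{n+1}$ should approximate elements of $A_{n+1} \vee (N' \cap M)$ in $\|\cdot\|_\varphi^\sharp$, for $j \le n$. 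The ergodicity mechanism replaces Popa's trace argument. For a finite-dimensional $Q \subset N$, the compression $x \mapsto \sum_p p x p$ over a masa of $Q' \cap N$ can be chosen close to the relative commutant expectation. Here I would use that $N$ is AFD, so $N' \cap M$ is the intersection of the decreasing sequence $N_k' \cap M$, together with a local averaging lemma. This lemma says that for a finite-dimensional (or hyperfinite type $\II_1$/$\III_\lambda$) subalgebra $R$, the closed convex hull of $\{u x u^* : u \in \mathcal U(R)\}$ meets $R' \cap M$, and can be realized approximately by a single abelian algebra's diagonal compression, using a Dixmier-type averaging with a good choice of unitaries diagonal in a finite abelian algebra. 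Passing to the limit gives $E_{A' \cap M}$ as a limit of these compressions, landing in $A \vee (N'\cap M)$.

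\textbf{Case (2).} Given $E : M \to N$, I need an AFD $P \subset N$ with expectation and $P' \cap M = N' \cap M$. When $N$ is not of type $\III_1$ (or decomposing along the centre, piece by piece), this follows from the author's earlier work in \cite{Ma25}: Popa's method for type $\II$ pieces and the discrete decomposition for type $\III_\lambda$, $\lambda<1$. For the type $\III_1$ part, the abstract indicates the route. First prove that the relative bicentralizer flow of $N \subset M$ is ergodic. Then invoke the relative bicentralizer theorem to get a state $\varphi$ on $N$ whose centralizer satisfies $(N_\varphi)' \cap M = N' \cap M$, with expectation from $M$ (composing with $E$). Inside the type $\II_1$ algebra $N_\varphi$, Popa's construction yields an AFD $P \subset N_\varphi$ irreducible relative to $M$.

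The main obstacle is the ergodicity of the relative bicentralizer flow in the type $\III_1$ case. I would attack it by showing that the fixed points of the flow lie in $N' \cap M$. Such a fixed point commutes with an asymptotically centralizing family, and I would attempt an ultraproduct argument with $E$ and Connes--Størmer transitivity to move between states. In case (1), the delicate step is the averaging lemma without an expectation. I expect AFD-ness, through the injectivity of $N$ and a hypertrace-type argument, to substitute for the missing $E$.
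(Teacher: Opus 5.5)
\textbf{Case (1).} Here you have no actual argument, and the mechanism you sketch cannot work as stated. For abelian $A\subset R$ one has $A'\cap M\supseteq A\vee(R'\cap M)$, and the compression onto $A'\cap M$ fixes $A$ pointwise, so it cannot approximate the averaging onto $R'\cap M$. The most one can ask is the relative statement that $A'\cap M$ be close to $A\vee(N'\cap M)$. Without a trace or an expectation this cannot be run through $\|\cdot\|_\varphi^\sharp$-estimates on compressions: $x\mapsto\sum_p pxp$ is not $\varphi$-preserving, and for diffuse $A$ there need not be any normal expectation onto $A'\cap M$. You give no argument that injectivity or a hypertrace would supply it.

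The paper works dually, with restriction norms of normal functionals. It makes $\|\xi|_{A'\cap M}\|-\|\xi|_{A\vee(N'\cap M)}\|$ small along a dense sequence in $M_*$ by iterating a one-step defect-reduction estimate over the minimal projections of finite-dimensional abelian algebras (Lemma~\ref{finite dimensional to finite dimensional abelian}, built from the diagonal algebra and the algebra of the cyclic shift in $M_n(\C)$). It then gets $A'\cap M=A\vee(N'\cap M)$ by Hahn--Banach, and only afterwards enlarges $A$ to a masa of $N$, so maximality need not be built into the induction.

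Be aware, however, that this one-step estimate does not hold as written. The identity $\|\xi\|=\|\xi-\xi\circ E\|+\|\xi\circ E\|$ used in its proof fails for every state $\xi$ with $\xi\neq\xi\circ E$. The constant $\frac12$ fails too. Take $M=M_2(\C)^{\oplus 3}$, $P$ the diagonal copy of $M_2(\C)$, and $\xi(a_1\oplus a_2\oplus a_3)=\frac13\sum_j\langle a_jx_j,y_j\rangle$, with $e_1,e_2$ the standard basis of $\C^2$ and $(x_1,y_1)=(e_1,e_2)$, $(x_2,y_2)=(\frac{e_1+e_2}{\sqrt2},\frac{e_1-e_2}{\sqrt2})$, $(x_3,y_3)=(\frac{e_1+\ri e_2}{\sqrt2},\frac{e_1-\ri e_2}{\sqrt2})$. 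Then $\|\xi\|=1$ and $\xi|_{P'\cap M}=0$. A masa $A$ of $P$ with minimal projection onto a unit vector $v$ gives $\|\xi|_{A'\cap M}\|=\frac13\sum_j\sqrt{1-r_j^2}\geq\frac23>\frac12$, where $r_j=|\langle v,x_j\rangle|^2-|\langle v,y_j\rangle|^2$ satisfy $\sum_j r_j^2=1$, and $A=\C$ gives $1$. So this estimate is a genuine gap in your proposal. The paper's route to it, on which its case (2) also relies, needs repair as well.

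\textbf{Case (2).} Your architecture is the paper's: find an AFD $P\subset N$ with expectation and $P'\cap M=N'\cap M$, then apply case (1) to $P\subset M$. But the step carrying the weight, ergodicity of $\beta^\varphi$ for arbitrary type $\III_1$ $N$, is only announced, and it is exactly what was open after \cite{AHHM18,Ma25}. The paper's proof (Theorem~\ref{first part}) runs as follows:
\begin{enumerate}[(i)]
\item The weak Dixmier property of $N\subset M$ gives a state $\Phi$ on $\B(\rL^2(M))$ with $\Phi(e_N)=1$ and $\Phi(\lambda(a)\rho(b))=\langle E_{N'\cap M}(a)\varphi^{1/2}b,\varphi^{1/2}\rangle$.
\item $\Phi$ is made strongly $\Ad(\Delta_\varphi^{\ri t})$-invariant and then concentrated at $\log\Delta_\varphi=0$. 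This uses the approximate-eigenstate technique of \cite{Ma18} and ucp maps $\theta_t$ that fix $e_N$, $\rho(M)$, $\lambda(\rB(N\subset M,\varphi)^{\beta^\varphi})$ and shift $\log\Delta_\varphi$ by $t$. The $\theta_t$ are built from partial isometries $v\in N^\omega$ with $v\varphi^\omega=e^t\varphi^\omega v$; this is where type $\III_1$ enters.
\item Averaging over the flow makes both marginals equal to $\varphi$.
\item The ultrapower implementation of binormal states (Lemma~\ref{binormal state bicentralizer}) then forces $\Phi(\lambda(a)\rho(b))=\langle a\varphi^{1/2}b,\varphi^{1/2}\rangle$ on the bicentralizer. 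Comparing with the formula in (i) yields $\rB(N\subset M,\varphi)^{\beta^\varphi}=N'\cap M$.
\end{enumerate}

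Moreover, your passage from ergodicity to a state with $N_\varphi'\cap M=N'\cap M$ is not available. Such a state forces $\rB(N\subset M,\varphi)=N'\cap M$, which for $N=M$ is Connes' open bicentralizer problem, even though ergodicity is known there \cite{Ma18}. Ergodicity must instead be fed directly into \cite[Theorem C]{AHHM18}, which produces the amenable $P$.

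Finally, your claim that a masa of $P$ is automatically a masa of $N$ fails for non-factors. Take $M=N=M_2(\C)\oplus M_2(\C)$, $P$ the diagonal copy of $M_2(\C)$, and $A=\{d\oplus d: d\in D_2\}$ with $D_2$ the diagonal masa. All your hypotheses hold, but $A'\cap N=D_2\oplus D_2\neq A$, because $\cZ(N)\subset P'\cap M$ does not place $\cZ(N)$ inside $P$. Instead take a masa $B\supseteq A$ of $N$ and use $B'\cap M\subset A'\cap M=A\vee(N'\cap M)\subset B\vee(N'\cap M)$, as the paper does.
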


Consider the following properties for a subfactor $N \subset M$ with separable predual (we use the terminology of \cite{Po21}) :
\begin{itemize}
\item MASA-ergodicity (or Kadison's property) : there exists an abelian subalgebra $A \subset N$ scuh that $A' \cap M=A$.
\item $R$-ergodicity : there exists a hyperfinite $\II_1$ subfactor $R \subset N$ such that $R' \cap M=\C$.
\item AFD-ergodicity : there exists an AFD subfactor $P \subset N$ such that $P' \cap M=\C$.
\end{itemize}
In \cite[Theorem 1.2]{Po21}, Popa proved that MASA-ergodicity implies $R$-ergodicity and left the reverse implication as an open question (see \cite{Po25}). Theorem \ref{letter kadison} answers this question affirmatively and shows that the three properties above are all equivalent.

Characterizing the ergodic subfactors $N \subset M$ that satsify the above equivalent properties remains an open question. An obvious necessary condition is the following averaging property called \emph{Dixmier's property} or \emph{MV-ergodicity} in \cite{Po21} :
$$ \forall x \in M, \quad  \overline{\mathrm{conv} \{ uxu^* \mid u \in \cU(N) \} }^{\text{weak*}}  \cap \C \neq \emptyset.$$
However, in \cite[Theorem 1.1]{Po21}, Popa obtained an example of a subfactor that is MV-ergodic but not $R$-ergodic (hence not MASA-ergodic). Therefore MV-ergodicity is necessary but not sufficient. An ergodic subfactor with expectation is automatically MV-ergodic (\cite{Ma19}, see also \cite[Theorem 4.2]{Ma25} for a simpler and cleaner proof) and this fact is the starting point of the proof of Theorem \ref{letter kadison}.(2) as well as \cite[Theorem A]{Ma25} and somehow also in \cite{Po81}.

The proof of  Theorem \ref{letter kadison}.(1) is quite elementary. It uses an inductive construction \emph{"\` a la Popa"}. However, since we do not have a trace as in \cite{Po81}, we use the $\rL^1$-norm as in \cite{Po21} and \cite{Ma25}. The key novelty in our iterative construction is Lemma \ref{finite dimensional to finite dimensional abelian} that replaces the hilbertian averaging argument of \cite[Lemma 2.1] {Po81}. Theorem \ref{letter kadison}.(2) on the other hand is a consequence of Theorem \ref{letter kadison}.(1) combined with the following theorem which establishes AFD-ergodicity of all ergodic subfactors with expectation.

\begin{letterthm} \label{letter thm amenable general}
Let $N \subset M$ be an inclusion of von Neumann algebras with separable preduals and suppose that $N$ is with expectation in $M$. Then there exists an AFD subalgebra with expectation $P \subset N$ such that $P' \cap M=N' \cap M$.
\end{letterthm}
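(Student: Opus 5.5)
Our plan is to reduce to the case where $N$ is a factor and then treat it according to type, with the type $\III_1$ case handled by ergodicity of the relative bicentralizer flow as the abstract announces.

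\emph{Reduction to factors.} Since $N$ is with expectation in $M$ and preduals are separable, we disintegrate $N$ over its center $\cZ(N)$. Note that $\cZ(N) \subset N' \cap M$, so any $P$ we build must satisfy $\cZ(N) \subset P' \cap M$ automatically; the real requirement is $P' \cap M \subset N' \cap M$. We would rather avoid measurable selection, so we first cut $N$ by projections in $\cZ(N)$ to get components of fixed type. If needed we work fiberwise and reassemble, or we replace $N$ by $N \ovt \rB(\ell^2)$ inside $M \ovt \rB(\ell^2)$ to get rid of finiteness issues. Note that an AFD subalgebra $P$ with $P' \cap M = N' \cap M$ is the same as $P \vee \cZ(N)$ being AFD with the same relative commutant. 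So it suffices to produce, for each fixed-type piece, an AFD $P \subset N$ with expectation such that $P' \cap M = N' \cap M$.

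\emph{Non-$\III_1$ cases.} These follow from \cite{Ma25}, where Kadison's property was established for such $N$ with expectation. We get a masa $A \subset N$ with $A' \cap M = A \vee (N' \cap M)$, taken with expectation, which is automatic for abelian subalgebras of semifinite $N$ after choosing a suitable weight. Then we enlarge $A$ to an AFD algebra $P$ generated by $A$ and a suitable unitary or partial isometry normalizing $A$ inside $N$ (for instance a Cartan-type or Krieger-type construction), so that $P' \cap N = \cZ(N)$. Then $P' \cap M \subset A' \cap M = A \vee (N'\cap M)$, and the commutant of $P$ in this algebra reduces to $N' \cap M$. Here we use that $P$ acts ergodically on $A$ modulo the center.

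\emph{Type $\III_1$ case.} This is the heart of the argument. Fix a faithful normal state $\varphi$ on $N$, extended to $M$ through the expectation $E$. Consider the relative bicentralizer $B(N \subset M,\varphi) = \{x \in M : x a - a x \to 0 \text{ for all bounded } (a_n) \text{ in } N \text{ with } \|a_n\varphi - \varphi a_n\| \to 0\}$. It carries the modular flow $\sigma^\varphi$, the bicentralizer flow. The first step is to show that this flow is ergodic, i.e. its fixed points are $N' \cap M$. To do this we would use MV-ergodicity of $N \subset M$ \cite[Theorem 4.2]{Ma25} applied to the inclusion $N \subset M$ together with $\varphi$-centralizing sequences. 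Haagerup's technique shows that fixed points of the flow commute with an AFD-approximate centralizer. Combined with Connes–Størmer transitivity on the state space of $N$, this lets us replace $\varphi$ by states whose bicentralizer fixed points lie in $N' \cap M$. We expect this step to be the main obstacle: one must transfer the averaging property from $\cU(N)$ to averaging by asymptotically $\varphi$-central unitaries, and we would attempt it through an ultraproduct $N^\omega$ argument using the Relative Bicentralizer Theorem to identify $(N_\varphi^\omega)' \cap M^\omega$.

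Given ergodicity, Haagerup's construction \cite{Co85}-style then yields an AFD (indeed $R_\infty$-type) subfactor $P \subset N$ with expectation, whose relative commutant in $M$ lies in the fixed points of the bicentralizer flow, hence equals $N' \cap M$. We would build $P$ as an increasing union of finite dimensional matrix algebras $Q_n \subset N_{\varphi_n}$. At each stage we choose, via Lemma \ref{finite dimensional to finite dimensional abelian}-type $\rL^1$-estimates, new matrix units that shrink a dense sequence of elements of $M$ toward $E_{N'\cap M}$ in $\|\cdot\|_{1}$. The expectation onto $P$ comes from $\varphi$-invariance.
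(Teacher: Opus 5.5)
Your overall architecture is the paper's: reduce to factors as in \cite[Theorem 10.20]{Ma25}, quote \cite{Ma25} when $N$ is not of type $\III_1$, and for a type $\III_1$ factor combine ergodicity of the relative bicentralizer flow with \cite[Theorem C]{AHHM18}. But the only genuinely new step, ergodicity of that flow, is neither proved nor correctly formulated in your proposal.

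\textbf{The flow is misidentified.} The bicentralizer flow is not the modular flow. The automorphism $\beta^\varphi_{e^t}$ of $\rB(N\subset M,\varphi)$ is implemented by finitely many partial isometries $v\in N^\omega$ with $\sum vv^*=1$ and $v\varphi^\omega=e^t\varphi^\omega v$. The statement you set out to prove, that the $\sigma^\varphi$-fixed points of the relative bicentralizer are $N'\cap M$, is false. Take a type $\III_1$ factor $N$, some $t\neq 0$, and $\widetilde M=N\rtimes_{\sigma_t^\varphi}\Z$. The implementing unitary $u$ satisfies $uy=\sigma_t^\varphi(y)u$ for $y\in N$, so it lies in $\rB(N\subset\widetilde M,\varphi)$ by Theorem~\ref{second part}. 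It also lies in the centralizer of the extension of $\varphi$ to $\widetilde M$, since $\varphi\circ\sigma_t^\varphi=\varphi$. Yet $N'\cap\widetilde M=\C$. Compare Theorem~\ref{ergodic flow inclusion}, where ergodicity of such a modular flow is a genuinely extra condition.

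\textbf{Ergodicity is not proved.} Even with the right flow, your sketch only lists tools and explicitly leaves open the transfer from averaging over $\cU(N)$ to averaging over asymptotically $\varphi$-central unitaries. That transfer is exactly what \cite{Ma25} could only achieve under the assumption that $N$ satisfies Connes' bicentralizer conjecture. A Haagerup-type maximality argument is what handled $N=M$ in \cite{Ma18}. Moving $\varphi$ by Connes--St\o rmer transitivity does not help, because ergodicity of the bicentralizer flow does not depend on the choice of state.

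\textbf{The missing idea.} The paper's proof of Theorem~\ref{first part} encodes the Dixmier property in a state on $\B(\rL^2(M))$ and concludes with the ultrapower implementation of binormal states.
\begin{enumerate}[(i)]
\item Set $\Phi_0(T)=\langle\chi(T)\varphi^{1/2},\varphi^{1/2}\rangle$, with $\chi$ a limit of averages of $\Ad(\lambda(u))$, $u\in\cU(N)$. Then $\Phi_0(e_N)=1$ and $\Phi_0(\lambda(a)\rho(b))=\langle E_{N'\cap M}(a)\varphi^{1/2}b,\varphi^{1/2}\rangle$.
\item The set of such states is strongly $\Ad(\Delta_\varphi^{\ri t})$-invariant, so it contains a strongly invariant state.
\item Apply the approximate eigenstate technique together with the ucp maps $\theta_t$ built from the partial isometries above. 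These maps fix $e_N$, $\rho(M)$ and $\lambda(a)$ for $\beta^\varphi$-fixed $a$, and send $f(\log\Delta_\varphi)$ to $f(\log\Delta_\varphi-t)$. The result is a state with $\Phi(f(\log\Delta_\varphi))=f(0)$ and unchanged values on $\lambda(a)\rho(b)$ for $a,b\in\rB(N\subset M,\varphi)^{\beta^\varphi}$.
\item Averaging over the flow restores $\Phi\circ\lambda=\Phi\circ\rho=\varphi$.
\item Lemma~\ref{binormal state bicentralizer} then forces $\Phi(\lambda(a)\rho(b))=\langle a\varphi^{1/2}b,\varphi^{1/2}\rangle$ on the bicentralizer. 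Comparing the two formulas gives $a=E_{N'\cap M}(a)$ for every fixed point $a$.
\end{enumerate}
Nothing in your proposal plays the role of steps (iii) and (v): concentrating the state at $\log\Delta_\varphi=0$ while keeping it binormal, and the ultrapower implementation.

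\textbf{Two smaller points.}
\begin{enumerate}[(i)]
\item Your derivation of the non-$\III_1$ case from Kadison's property does not work as stated. The masa you obtain may be singular, leaving no normalizer to enlarge it with. Also, abelian subalgebras of semifinite algebras need not be with expectation (e.g.\ a diffuse masa in $\B(H)$). The detour is unnecessary anyway, since \cite{Ma25} proves the statement directly in that case.
\item Your final construction with matrix algebras in varying centralizers $N_{\varphi_n}$ does not obviously produce an expectation onto $P$. You should simply invoke \cite[Theorem C]{AHHM18}, as the paper does.
\end{enumerate}
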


Using Theorem \ref{letter thm amenable general} to reduce Kadison's problem for subfactors with expectation to the AFD case was a key idea of \cite{Ma25}. However, in \cite{Ma25}, we were not able to prove Theorem \ref{letter thm amenable general} in full generality when $N$ is of type $\III_1$. Indeed, when $N$ is of type $\III_1$, \cite[Theorem C]{AHHM18} says that there exists an amenable subfactor with expectation $P \subset N$ such that $P$ is irreducible in $M$ if and only if the so-called \emph{relative bicentralizer flow} $\beta^\varphi : \R^*_+ \curvearrowright \rB(N \subset M,\varphi)$ is ergodic. In \cite{AHHM18}, it was conjectured that this bicentralizer flow is always ergodic. In \cite[Theorem D]{Ma18}, we proved this conjecture when $N=M$. In \cite[Theorem F]{Ma25}, we proved this conjecture for an arbitrary inclusion $N \subset M$ but under the assumption that $N$ itself satisfies Connes' bicentralizer conjecture. In this paper, we get rid of this assumption and prove the ergodicity of the relative bicentralizer flow in full generality. More generally, we determine all the eigenvectors of the bicentralizer flow.

\begin{letterthm} \label{main theorem}
Let $N \subset M$ be an inclusion of von Neumann algebras with expectation, where $N$ is of type $\III_1$. Let $\varphi$ be a faithful normal state on $N$. Then the fixed point algebra of the bicentralizer flow $\beta^\varphi : \R^*_+ \curvearrowright \rB(N \subset M,\varphi)$ is equal to $N' \cap M$.

Moreover, for every $t \in \R$ and every $x \in M$, the following are equivalent :
\begin{enumerate}
\item $x \in \rB(N \subset M,\varphi)$ and $\beta_\lambda^\varphi (x)=\lambda^{\ri t} x$ for all $ \lambda \in \R^*_+$.
\item $xy=\sigma_t^\varphi(y)x$ for all  $y \in N$.
\end{enumerate}
\end{letterthm}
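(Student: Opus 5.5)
The implication (2)$\Rightarrow$(1) should be formal, and the implication (1)$\Rightarrow$(2) with $t=0$ is the heart of the matter. For the general $t$ I plan to reduce to $t=0$ by an auxiliary tensor product. I use the ultrapower description of the relative bicentralizer from \cite{AHHM18}. For a free ultrafilter $\omega$, $\rB(N \subset M,\varphi) = (N^\omega)_{\varphi^\omega}' \cap M$. The flow is characterized as follows: whenever $a \in N^\omega$ is a nonzero $\lambda$-eigenoperator, i.e.\ $\varphi^\omega a = \lambda a \varphi^\omega$, we have $a x = \beta^\varphi_\lambda(x) a$ for $x \in \rB(N\subset M,\varphi)$. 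Such eigenoperators exist for every $\lambda>0$, and can be taken to be partial isometries with full support, because $N$ is of type $\III_1$ (Ando--Haagerup).

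\emph{(2)$\Rightarrow$(1).} Suppose $xy=\sigma_t^\varphi(y)x$ for all $y\in N$. Let $(a_n)$ be $\varphi$-centralizing. Then $\sigma_t^\varphi(a_n)-a_n \to 0$ $*$-strongly, which is the standard fact that the modular group acts trivially on $N^\omega_{\varphi^\omega}$. Hence $[x,a_n]\to 0$, so $x\in \rB(N\subset M,\varphi)$. If $a=(a_n)^\omega$ is a $\lambda$-eigenoperator, the relation passes to the ultrapower, giving $xa=\sigma_t^{\varphi^\omega}(a)x = \lambda^{\ri t} a x$. Hence $a x = \lambda^{-\ri t} x a$; I will fix the sign convention for $\beta$ accordingly. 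This gives $\beta^\varphi_\lambda(x)=\lambda^{\ri t}x$.

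\emph{(1)$\Rightarrow$(2) for $t=0$.} Let $x$ be fixed by $\beta^\varphi$. Then $x$ commutes with $N^\omega_{\varphi^\omega}$ and with every eigenoperator $a\in N^\omega$. So $x$ commutes with the von Neumann algebra $Q\subset N^\omega$ generated by all eigenoperators of $\sigma^{\varphi^\omega}$ with real eigenvalues. The goal is to show $N\subset Q$, which gives $x\in N'\cap M$.

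To prove $N \subset Q$, I would use the fact that $Q$ is globally invariant under $\sigma^{\varphi^\omega}$ and carries a $\varphi^\omega$-preserving conditional expectation $E_Q$. It then suffices to show $\|y-E_Q(y)\|_{\varphi}=0$ for $y\in N$. For this I take $y$ in a spectral subspace of $\sigma^\varphi$ with spectrum in a small interval around $\log\lambda$. I then multiply $y$ by a full-support partial isometry $v$ that is a $\lambda^{-1}$-eigenoperator. The product $vy$ lies almost in the centralizer of $\varphi^\omega$, up to an error controlled by the width of the interval. Letting the interval shrink and using a diagonal ultrapower argument, I expect to obtain $vy \in N^\omega_{\varphi^\omega}$ up to arbitrarily small error. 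Then $y=v^*(vy)\in Q$ up to arbitrarily small error.

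I expect this approximation step to be the main obstacle: it is exactly where \cite{Ma25} used Connes' bicentralizer conjecture. The new input must be a quantitative control of the error that is uniform in $\lambda$, obtained through the ultrapower rather than through $N$ itself.

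\emph{General $t$.} I form $\tilde N=N\ovt L$ and $\tilde M=M\ovt L$, with $L$ the AFD $\III_1$ factor and state $\varphi\otimes\psi$. I then pick an element $w\in L^\omega$ with $w z=\sigma^{\psi^\omega}_{-t}(z)w$ for all $z$, obtained by a Connes--Størmer type transitivity argument. The relative bicentralizer of the tensor inclusion contains $\rB(N\subset M,\varphi)\otimes 1$. The element $x\otimes w$, interpreted in the ultrapower inclusion, is then $\beta$-fixed. The $t=0$ case, applied to the ultrapower inclusion (where the same argument works), gives that $x\otimes w$ commutes with $N\otimes 1$ twisted appropriately. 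Unwinding this yields $xy=\sigma^\varphi_t(y)x$ for all $y\in N$.

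Finally, the first assertion is the case $t=0$ combined with the obvious inclusion $N'\cap M\subset \rB(N\subset M,\varphi)^{\beta^\varphi}$.
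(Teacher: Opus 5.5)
\textbf{The $t=0$ step has a genuine gap: the inclusion $N\subset Q$ you aim for is false in general, so no refinement of the approximation can reach it.} $Q$ is the weak closure of the span of the $\sigma^{\varphi^\omega}$-eigenoperators, so $\Delta_{\varphi^\omega}^{\ri s}$ has pure point spectrum on $\overline{Q(\varphi^\omega)^{1/2}}$. Suppose $N\subset Q$. Since $\sigma^{\varphi^\omega}|_N=\sigma^\varphi$, the closed subspace $\overline{N(\varphi^\omega)^{1/2}}\cong\rL^2(N,\varphi)$ is invariant, and $\Delta_{\varphi^\omega}^{\ri s}$ restricts to $\Delta_\varphi^{\ri s}$ on it. Then $\Delta_\varphi$ would have pure point spectrum, i.e.\ $\varphi$ would be almost periodic. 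The theorem concerns an arbitrary faithful normal state, and a type $\III_1$ factor need not admit any almost periodic state, so choosing $\varphi$ cleverly does not help. When $\varphi$ is almost periodic, $N\subset Q$ is trivial, since $N$ is generated by its own $\sigma^\varphi$-eigenoperators; your strategy only covers that easy case.

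You can see exactly where the approximation breaks. For a $\mu$-eigenoperator $v\in N^\omega$, left multiplication by $v$ intertwines $\Delta_\varphi^{\ri s}$ on $\rL^2(N)$ with $\Delta_{\varphi^\omega}^{\ri s}$ up to the phase $\mu^{\pm\ri s}$. Hence $E_{N^\omega_{\varphi^\omega}}(vy)(\varphi^\omega)^{1/2}=v\,1_{\{\pm\log\mu\}}(\log\Delta_\varphi)\,y\varphi^{1/2}$. This vanishes whenever the spectral measure of $y\varphi^{1/2}$ has no atom at that point, however narrow the spectral window of $y$. Narrow spectrum only gives $\sigma_s(vy)\approx vy$ for bounded $s$, not $\|\cdot\|_{\varphi^\omega}$-closeness to the centralizer. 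A diagonal or iterated ultrapower argument just reproduces the same almost-periodicity obstruction one level up.

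\textbf{What is missing is the expectation $E_N$.} Your $t=0$ argument never uses it, yet the statement is about $Q'\cap M$, not about $N\subset Q$. The paper enters through the Dixmier property $E_{N'\cap M}\in\overline{\mathrm{conv}}\{\Ad(u)\mid u\in\cU(N)\}$. This yields a state $\Phi$ on $\B(\rL^2(M))$ with $\Phi(e_N)=1$ and $\Phi(\lambda(a)\rho(b))=\langle E_{N'\cap M}(a)\varphi^{1/2}b,\varphi^{1/2}\rangle$. Your spectral-shifting idea does appear, but at the level of states:
\begin{itemize}
\item The maps $\theta_t$ of Lemma~\ref{ucp maps}, built from eigenoperators in $N^\omega$, shift $\log\Delta_\varphi$ by $t$ while fixing $e_N$, $\rho(M)$ and $\lambda(\rB(N\subset M,\varphi)^{\beta^\varphi})$.
\item Lemma~\ref{centralizing} applies them to approximate eigenstates of $\log\Delta_\varphi$, which exist for any spectrum because the states may be singular. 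This gives $\Phi(f(\log\Delta_\varphi))=f(0)$.
\item Averaging along the flow gives $\Phi\circ\lambda=\Phi\circ\rho=\varphi$.
\item Lemma~\ref{binormal state bicentralizer} then forces $\Phi(\lambda(a)\rho(b))=\langle a\varphi^{1/2}b,\varphi^{1/2}\rangle$ on $\rB(N\subset M,\varphi)$.
\end{itemize}
Comparing the two formulas gives $\rB(N\subset M,\varphi)^{\beta^\varphi}=N'\cap M$.

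\textbf{Your reduction of general $t$ to $t=0$ also fails.} Take $a\in N^\omega$ a $\mu$-eigenoperator and $b\in L^\omega$ a $\mu^{-1}$-eigenoperator; then $a\otimes b$ is centralizing for $(\varphi\otimes\psi)^\omega$. Testing against it shows that $x\otimes 1$ lies in the relative bicentralizer of $N\ovt L\subset M\ovt L$ only if $x$ is already $\beta^\varphi$-fixed. For $x\otimes w$ the phases of the two legs multiply rather than cancel. Depending on the sign chosen for $w$, either $x\otimes w$ fails to commute with such $a\otimes b$, or testing against $a\otimes 1$ shows the flow multiplies it by $\mu^{\ri t}$. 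Either way it is not fixed when $t\neq 0$. The cancellation must happen inside a single algebra, as in the paper. In $\widetilde M=M\rtimes_{\sigma_t^\varphi}\Z$ the implementing unitary $u$ is a $\mu^{\ri t}$-eigenvector of the flow, so $u^*x$ is fixed, and the $t=0$ case gives $u^*x\in N'\cap\widetilde M$.

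Your direct proof of (2)$\Rightarrow$(1) is fine. However, the sign there must be checked against the convention defining $\beta^\varphi$, not chosen to fit.
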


For an inclusion of von Neumann algebras $N \subset M$, we define its \emph{relative $T$-invariant} by
$$ T(N \subset M):=\{ t \in \R \mid \text{there exists a nonzero } x \in M \text{ such that } xy=\sigma_t^\varphi(y)x \text{ for all } y \in N\}$$
where $\varphi$ is any faithful normal semifinite weight on $N$ (the invariant is independent of the choice of $\varphi$ because of Connes's Radon-Nikodym cocycle theorem). It generalizes the classical $T$-invariant of a factor defined by Connes \cite{Co72}.

Thanks to Theorem \ref{main theorem} combined with \cite[Theorem C]{AHHM18}, we obtain the following corollary. 

\begin{lettercor} \label{letter cor amenable III1}
Let $N \subset M$ be an inclusion of von Neumann algebras with expectation and with separable preduals. Suppose that $N$ is a type $\III_1$ factor. Then there exists an amenable subfactor with expectation $P \subset N$ such that $P' \cap M=N' \cap M$.

We can always take $P$ of type $\III_1$. If $\lambda \in (0,1)$, we can take $P$ of type $\III_\lambda$ if and only if there is no nonzero $t \in T(N \subset M)$ such that $\lambda^{\ri t}=1$. This obstruction occurs for at most countably many values of $\lambda$.
\end{lettercor}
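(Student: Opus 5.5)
The corollary follows by feeding Theorem \ref{main theorem} into \cite[Theorem C]{AHHM18}. That theorem says: for a type $\III_1$ factor $N$ with expectation in $M$ (separable preduals), an amenable subfactor with expectation $P \subset N$ with $P' \cap M = N' \cap M$ exists if and only if the relative bicentralizer flow $\beta^\varphi$ is ergodic. Here ergodic means that its fixed point algebra is $N' \cap M$, which is the relative form of the condition.

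\textbf{Existence, and the type $\III_1$ case.} Theorem \ref{main theorem} gives exactly that the fixed points of $\beta^\varphi$ on $\rB(N \subset M,\varphi)$ form $N' \cap M$. So \cite[Theorem C]{AHHM18} produces the desired $P$. I expect the construction in \cite{AHHM18} to yield $P$ of type $\III_1$. If not, one replaces $P$ by a suitable AFD $\III_1$ subfactor built from the flow, via the tensor-product trick with $R_\infty$. Either way this gives the first two assertions.

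\textbf{The type $\III_\lambda$ case.} I expect the refined version of \cite[Theorem C]{AHHM18} to state the following. One can take $P$ of type $\III_\lambda$ if and only if the restricted action $\beta^\varphi|_{\lambda^{\Z}}$ is still ergodic, i.e. its fixed points are $N' \cap M$. Granting this, suppose $x$ is fixed by $\beta^\varphi_\lambda$. Decompose $x$ spectrally for the flow restricted to the compact-quotient direction $\R^*_+/\lambda^{\Z}$. By Fourier analysis on the circle $\R^*_+/\lambda^\Z$, the Fourier coefficients of $x$ are eigenvectors of the full flow. Their eigencharacters $\mu \mapsto \mu^{\ri t}$ satisfy $\lambda^{\ri t}=1$. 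Conversely, any such eigenvector is $\beta_\lambda$-fixed.

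The key step, and the main technical obstacle, is making this Fourier decomposition rigorous. The flow is only normal and the averaging has to be done in the $\rL^2$ or weak$^*$ sense on the relative bicentralizer. I would average $\lambda^{-\ri t s}\beta_{\lambda^{s}}(x)$ over $s\in[0,1]$, using $\beta_\lambda(x)=x$. By Theorem \ref{main theorem}, a nonzero eigenvector with exponent $t \neq 0$ exists exactly when $t \in T(N \subset M)$. Therefore $\beta|_{\lambda^\Z}$ is ergodic if and only if no nonzero $t \in T(N\subset M)$ has $\lambda^{\ri t}=1$.

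\textbf{Countability.} It remains to show that $T(N \subset M)$ is countable, which suffices since each $t\neq 0$ excludes only the countably many $\lambda = e^{2\pi k/t}$. Take $x_t$ nonzero implementing $xy = \sigma^\varphi_t(y)x$. Partial isometry polar parts give eigenoperators $v_t \in \rB(N\subset M,\varphi)$ for distinct characters. By Theorem \ref{main theorem} these are eigenvectors of $\beta^\varphi$ for distinct eigenvalues. They are therefore mutually orthogonal in $\rL^2$ for the invariant state on the relative bicentralizer; for this I would use the state $\varphi\circ E$ restricted there, which is $\beta$-invariant. Separability of $M_*$ then forces countably many $t$.
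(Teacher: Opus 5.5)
Your proposal is correct. For existence, the type $\III_1$ statement and the $\III_\lambda$ criterion, you follow the paper. The paper combines \cite[Theorem C]{AHHM18} with Corollary \ref{criterion fixed point}. That corollary's proof is your Fourier argument: the restriction of $\beta^\varphi$ to $\rB(N \subset M,\varphi)^{\beta^\varphi_\lambda}$ factors through the compact group $\R^*_+/\lambda^{\Z}$, so this algebra is generated by $\beta^\varphi$-eigenvectors whose exponents satisfy $\lambda^{\ri t}=1$, and Theorem \ref{second part} identifies those eigenvectors. Your fallback via $R_\infty$ is unnecessary, since Theorem C already produces a type $\III_1$ subfactor $P$.

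The countability of $T(N\subset M)$ is where you take a different route. The paper argues elementarily. Suppose $x_iy=\sigma^\varphi_{t_i}(y)x_i$ for all $y\in N$, with $t_1\neq t_2$. Then $E(x_1^*x_2)\,y=\sigma^\varphi_{t_2-t_1}(y)\,E(x_1^*x_2)$ for all $y \in N$. Nontrivial modular automorphisms of a type $\III_1$ factor are outer, so $E(x_1^*x_2)=0$. An $E$-orthogonal family in $M$ with separable predual is countable.

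You instead use Theorem \ref{main theorem} to place each $x_t$ in $\rB(N\subset M,\varphi)$ as a $\beta^\varphi$-eigenvector with exponent $t$. The $\beta^\varphi$-invariance of $\varphi$ on the bicentralizer then forces $\varphi(x_{t_1}^*x_{t_2})=0$. This gives an orthogonal family of nonzero vectors $x_t\varphi^{1/2}$ in the separable space $\rL^2(M)$. The argument is valid; the polar-decomposition step is superfluous, since $x_t$ itself is already an eigenvector. Its cost is that an essentially elementary fact now depends on the main theorem and on the invariance of $\varphi$ under the flow. The paper's argument is independent of the bicentralizer machinery, using only Connes' outerness result, and it yields the stronger $E$-orthogonality.
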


In \cite{Ma25}, we proposed a \emph{relative bicentralizer conjecture} that computes explicitely the relative bicentralizer of an arbitrary inclusion of von Neumann algebras with expectation. The computation of the eigenvectors of the bicentralizer flow given by Theorem \ref{main theorem} is exactly the one predicted by this relative bicentralizer conjecture, see Theorem \ref{second part}. 

In the same spirit as Theorem \ref{main theorem}, we can also compute the fixed point algebra and the eigenvectors of the modular flow on the relative bicentralizer $\rB(N \subset M,\varphi)$ for an arbitrary inclusion of von Neumann algebras with expectation $N \subset M$, without assuming that $N$ satisfies the bicentralizer conjecture (see Theorem \ref{eigenvectors modular flow} below and compare to \cite[Theorem 8.13]{Ma25}). As an application, we obtain the following relative version of \cite{MV23} (see \cite[Corollary 8.15]{Ma25} for a partial result). 

\begin{letterthm} \label{ergodic flow inclusion}
Let $N \subset M$ be an irreducible subfactor with expectation and with separable preduals. Then the following are equivalent :
\begin{enumerate}
\item There exists a faithful state $\varphi \in M_*$ such that $\sigma^\varphi : \R \curvearrowright M$ is ergodic and $N$ is globally invariant under $\sigma^\varphi$.
\item The canonical inclusion of continuous cores $c(N) \subset c(M)$ is irreducible.
\end{enumerate}
\end{letterthm}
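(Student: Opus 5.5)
We prove (1)$\Rightarrow$(2) directly, and (2)$\Rightarrow$(1) by combining the computation of the modular flow on the relative bicentralizer (Theorem \ref{eigenvectors modular flow}) with a Connes--Størmer/Haagerup-type transitivity argument.

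\textbf{(1)$\Rightarrow$(2).} Let $\varphi$ be as in (1). Since $N$ is globally $\sigma^\varphi$-invariant, Takesaki's theorem gives that $N$ is with expectation $E$ for $\varphi$, and $c(N)=N\rtimes_{\sigma^\varphi}\R\subset M\rtimes_{\sigma^\varphi}\R=c(M)$. Take $x\in c(N)'\cap c(M)$. Then $x$ commutes with $\lambda^\varphi(\R)$, so $x\in (M\rtimes \R)\cap \lambda(\R)'=M_\varphi\vee \lambda(\R)''$. Ergodicity of $\sigma^\varphi$ gives $M_\varphi=\C$, hence $x\in \lambda(\R)''$, say $x=f(\log\Delta)$-type element $\int \hat f(s)\lambda_s\,ds$. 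Commutation with $N$ forces $\int\hat f(s)(\lambda_s y\lambda_s^*-y)\lambda_s ds=0$, i.e. a spectral condition involving $\sigma_s^\varphi(y)=y$ on the support of $\hat f$. Since $N$ is a factor with $\sigma^\varphi|_N$ nontrivial (indeed $N_\varphi\subset M_\varphi=\C$, so $N$ is type $\III_1$ or trivial), this support must be $\{0\}$ up to measure, hence $x$ is scalar.

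\textbf{(2)$\Rightarrow$(1).} The plan is as follows.
\begin{enumerate}
\item Irreducibility of $c(N)\subset c(M)$ implies $N$ is type $\III_1$ and $T(N\subset M)=\{0\}$. Indeed, any $x\neq 0$ with $xy=\sigma_t^\psi(y)x$ for $y\in N$ yields $x\lambda_t^*\in c(N)'\cap c(M)$, using the dual weight on the core.
\item Fix a faithful normal state $\psi$ on $N$. By Theorem \ref{main theorem}, the relative bicentralizer $\rB(N\subset M,\psi)$ has no nontrivial eigenvectors for $\beta^\psi$. By Theorem \ref{eigenvectors modular flow}, the modular flow on $\rB(N\subset M,\psi\circ E)$ has fixed points in $N'\cap M=\C$ and eigenvectors indexed by $T(N\subset M)=\{0\}$. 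Hence both flows are ergodic with trivial point spectrum.
\item Following \cite{MV23} and \cite[Corollary 8.15]{Ma25}, use \cite[Theorem C]{AHHM18}-style approximation to choose a state $\varphi_0$ on $N$ whose centralizer is irreducible in the relative bicentralizer sense: $(N_{\varphi_0})'\cap M=\rB(N\subset M,\varphi_0)$. This is possible by Haagerup's relative theorem applied inside $N\subset M$.
\item Perturb $\varphi_0$ by a unitary/positive element of the relative bicentralizer, in the style of Connes--Størmer transitivity. The aim is that the modular flow of $\varphi=\varphi_0\circ E$ restricted to $\rB$ becomes ergodic, which is possible by step 2. Then $M_\varphi\subset (N_{\varphi})'\cap M\cap M_\varphi=\rB^{\sigma^\varphi}=\C$, so $\sigma^\varphi$ is ergodic and $N$ is invariant because $\varphi=\varphi\circ E$.
\end{enumerate}

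The main obstacle is step 4. We must show that the bicentralizer twist yields a genuinely ergodic modular action on the whole $M_\varphi$, and that the relative bicentralizer changes covariantly under such perturbations. The approach we would try first is to mimic \cite{MV23}: use the ergodicity and trivial point spectrum from step 2 to find an ergodic one-parameter group on $\rB$ implemented as a modular group via the bicentralizer flow identification $\rB(N\subset M,\varphi)\cong\rB(N\subset M,\psi)$.
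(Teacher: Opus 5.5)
\textbf{Verdict.} Your direction (1)$\Rightarrow$(2) is correct. The direction (2)$\Rightarrow$(1) has two genuine gaps: hypothesis (2) is fed into the argument incorrectly, and Step 4 does not produce an ergodic state.

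\textbf{Direction (1)$\Rightarrow$(2).} It ends more cleanly as follows. You have $x \in L_\varphi(\R)'' \subset c(N)$ and $x \in c(N)'$, so $x \in \cZ(c(N))=\C$. This uses that $N_\varphi \subset M_\varphi=\C$ forces $N$ to be a type $\III_1$ factor.

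\textbf{First gap: how hypothesis (2) is used.} Theorem \ref{eigenvectors modular flow} does not say that the fixed points of $\sigma^\varphi$ on $\rB(N\subset M,\varphi)$ lie in $N'\cap M$. Nor does it say that its eigenvectors are indexed by $T(N\subset M)$. What it says is $\rB(N\subset M,\varphi)^{\sigma^\varphi_t}=\rb(N\subset M,\varphi)^{\sigma^\varphi_t}$ for $t\neq 0$, and hence $\rB(N\subset M,\varphi)^{\sigma^\varphi}=\rb(N\subset M,\varphi)^{\sigma^\varphi}$.

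You then still need $\rb(N\subset M,\varphi)^{\sigma^\varphi}=\C$, and this is not controlled by $T(N\subset M)$. Take a dominant weight $\psi$ on $N$. The equivariant isomorphism $\rb(N\subset M,\varphi)\cong \rb(N\subset M,\psi)=N_\psi'\cap M$ identifies $\rb(N\subset M,\varphi)^{\sigma^\varphi}$ with $N_\psi'\cap M_\psi$. By Takesaki duality, $N_\psi'\cap M_\psi \cong c(N)'\cap c(M)$. This is the Proposition just before the theorem, and it is exactly where irreducibility of the cores is used.

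Your Step 1 only extracts $T(N\subset M)=\{0\}$ from (2). Even that needs an extra argument: you must show that $x$ lies in the centralizer of $\psi\circ E$ before $\lambda_t^*x$ commutes with $\lambda(\R)$. Theorem \ref{main theorem}, which concerns $\beta^\psi$, is beside the point here.

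\textbf{Second gap: Step 4.} The inclusion $M_\varphi\subset (N_\varphi)'\cap M$ is false. Since $N_\varphi=N\cap M_\varphi\subset M_\varphi$, it would force $N_\varphi$ to be abelian. Yet the state $\varphi_0$ of Step 3 is chosen precisely to have a large centralizer.

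Obtaining $M_\varphi=\C$ requires in particular a faithful state on the type $\III_1$ factor $N$ with trivial centralizer. A Connes--St\o rmer-type perturbation inside the relative bicentralizer gives no mechanism for this. Your closing suggestion to ``mimic \cite{MV23}'' is exactly where the whole difficulty lies.

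The paper does not redo this construction. It derives the theorem from Theorem \ref{eigenvectors modular flow} together with \cite[Theorem 4.3]{MV23}. That result supplies the state with ergodic modular flow leaving $N$ invariant, once the fixed-point and eigenvector information on the relative bicentralizer is available. Your Steps 3--4 should be replaced by that input, with the Proposition above providing the link to hypothesis (2).
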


Let us end this introduction by saying a few words about the proof of Theorem \ref{main theorem}. The starting point of the proof is \emph{Dixmier's property} for the inclusion $N \subset M$. We then construct a binormal state $\Phi \in \B(\rL^2(M))^*$ whose properties encode the weak Dixmier property of the inclusion and then apply the \emph{ultrapower implementation of binormal states} technique of \cite{Ma25}. But in order to apply this technique, one needs to control the behaviour of the state $\Phi$ with respect to the modular operator. The key novelty of this paper is to use the \emph{approximate eigenstate} technique of \cite{Ma18} and then carefully manipulate the state $\Phi$ in order to make it well-behaved with respect to the modular operator, while preserving its binormality.

Altogether, the proof of Theorem \ref{main theorem} is new even in the case where $N=M$, and it is easier than the original proof of \cite[Theorem D]{Ma18}. Indeed, the proof of \cite[Theorem D]{Ma18} relies on the very intricate maximality argument of \cite{Ha85} instead of the ultrapower implementation technique that we use for the proof of Theorem \ref{main theorem}.

\bigskip

{\bf Acknowledgment.} We thank Sorin Popa for his valuable comments.

\tableofcontents

\pagebreak

\section{Kadison's problem for AFD subfactors}

\begin{lem} \label{finite dimensional to finite dimensional abelian}
Let $M$ be a von Neumann algebra and $P \subset M$ be a finite dimensional subalgebra. Then for every $\xi \in M_*$, there exists a finite dimensional abelian subalgebra $A \subset P$ such that
\begin{equation} \label{inequality finite dimensional abelian}
\| \xi |_{A' \cap M} \| - \| \xi|_{P' \cap M} \| \leq  \frac{1}{2} \left( \| \xi \| - \| \xi|_{P' \cap M}    \| \right).
\end{equation} 
\end{lem}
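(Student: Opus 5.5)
The plan is to use the structure of $P$ explicitly. Write $P=\bigoplus_{j} \rB(\C^{n_j})$ with a system of matrix units $(e^{(j)}_{kl})$, and let $D\subset P$ be the corresponding diagonal masa. Inside each block I take the Weyl unitaries: the diagonal unitary $w_j=\sum_k \omega_j^k e^{(j)}_{kk}$ with $\omega_j=e^{2\pi \ri/n_j}$, and the cyclic shift $s_j$. The diagonal masa $D_j$ is generated by $w_j$, and the ``Fourier'' masa $F_j$ is generated by $s_j$; the pair $D_j,F_j$ generates $\rB(\C^{n_j})$. Set $v=\sum_j s_j$. Then $v$ normalizes $D$, and $D\cup\{v\}$ generates $P$ modulo the centre, so that $(D'\cap M)\cap\{v\}'=P'\cap M$. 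The candidates for $A$ will be the conjugates $uDu^*$ with $u\in\cU(P)$, and the whole point is to choose $u$ cleverly relative to $\xi$.

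\textbf{Step 1 (averaging).} For any masa $A$ of $P$ normalized by a finite abelian group $G\subset\cU(P)$ with $(A'\cap M)\cap G'=P'\cap M$, the map $E_G=\frac{1}{|G|}\sum_{g\in G}\Ad(g)$ is a conditional expectation from $A'\cap M$ onto $P'\cap M$. Hence for every $x$ in the unit ball of $A'\cap M$ we have
\[
\xi(x)=\xi(E_G(x))+\xi\bigl(x-E_G(x)\bigr),\qquad |\xi(E_G(x))|\le \|\xi|_{P'\cap M}\|.
\]
The whole game is therefore to bound the error term $\xi(x-E_G(x))$ by half of the available room $\|\xi\|-\|\xi|_{P'\cap M}\|$.

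\textbf{Step 2 (choosing the masa).} Rather than fixing $A=D$, I would average over the choice of masa. Consider the conjugates $A_u=uDu^*$ and, in parallel, the functionals $\xi\circ\Ad(u)$. Using the polar decomposition $\xi=|\xi|(\cdot\, w)$, I would try to show that
\[
\|\xi|_{A'\cap M}\|+\|\xi|_{B'\cap M}\|\le \|\xi\|+\|\xi|_{P'\cap M}\|
\]
whenever $A$ and $B$ are two masas of $P$ in ``complementary position'', meaning $A\vee B=P$, for instance $D$ and $uDu^*$ with $u$ the Fourier unitary, so that $uDu^*=F$. Once this two-masa inequality is available, the lemma follows immediately: the smaller of the two terms on the left is at most half of the right-hand side, and that masa is the desired $A$, giving exactly \eqref{inequality finite dimensional abelian}.

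\textbf{Step 3 (the two-masa inequality).} To prove it, pick norming elements $x\in (A'\cap M)_1$ and $y\in(B'\cap M)_1$, rotated so that $\xi(x)$ and $\xi(y)$ are positive. The plan is to produce from them an element of norm at most $1$ in $M$, and another in $P'\cap M$, whose $\xi$-values add up to at least $\xi(x)+\xi(y)$. The natural first attempt is to work with the self-adjoint parts in the positive case $\xi\ge 0$ and use a column/row $2\times 2$ matrix trick:
\[
\xi(x)+\xi(y)\le \xi\Bigl(\tfrac{x+y}{2}\Bigr)\cdot 2 .
\]
One then splits $x+y$ via the averaging expectation onto $(A\vee B)'\cap M=P'\cap M$, applied in alternating fashion $E_A E_B E_A\cdots$ (von Neumann alternating projections in the $\rL^1$ picture). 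The general case would be reduced to $\xi\ge 0$ through the polar decomposition, replacing $M$ by the algebra acted on by $w$.

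I expect this Step~3 estimate to be the main obstacle. There is no invariant Hilbert norm, so the alternating-projection argument must be done at the level of $\|\cdot\|_1$ norms of restrictions. If a clean two-masa inequality fails, my fallback is to replace ``two masas'' by averaging over all $n_j$ conjugates $v^kFv^{-k}$, equivalently over the finite Heisenberg group in each block. I would then use convexity of $\xi\mapsto\|\xi|_{C}\|$ together with the identity $\frac{1}{|G|}\sum_g \xi\circ\Ad(g)=\xi\circ E_G$ to find one conjugate achieving the factor $\frac12$.
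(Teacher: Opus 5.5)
Your Step 1 is correct, but the argument fails at Step 2. The two-masa inequality is false. Take $M=P=M_2(\C)$, let $A=D$ be the diagonal masa and $B=F$ the masa generated by the flip $s=\begin{pmatrix}0&1\\1&0\end{pmatrix}$, and let $\xi=\mathrm{Tr}(h\,\cdot)$ with $h=\begin{pmatrix}1&1\\1&-1\end{pmatrix}$.

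Then $\|\xi\|=\|h\|_1=2\sqrt2$ and $\|\xi|_{P'\cap M}\|=|\mathrm{Tr}(h)|=0$. On the other hand $\|\xi|_{D}\|=\|E_D(h)\|_1=2$ and $\|\xi|_{F}\|=\|E_F(h)\|_1=\|s\|_1=2$. So the left side of your two-masa inequality is $4>2\sqrt2$, and neither $D$ nor $F$ satisfies the lemma. Some other masa does work here, for instance one in which $h$ has zero diagonal, so at the very least $A$ would have to be adapted to $\xi$.

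This is exactly the inequality the paper's own proof goes through; its $C$ is your $F$. There it is derived from the asserted identity $\|\xi\|=\|\xi-\xi\circ E\|+\|\xi\circ E\|$. That identity is only the triangle inequality, and it fails in the same example: $2\sqrt2$ versus $2+2$.

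Your fallback does not repair this, for two reasons:
\begin{enumerate}
\item The conjugates are all the same masa: $v^kFv^{-k}=F$ because $v\in F$, and conjugating by the diagonal unitary $w$ also fixes $F$.
\item The convexity goes the wrong way. Applied to $\xi\circ E_G=\frac{1}{|G|}\sum_g\xi\circ\Ad(g)$, it gives $\|(\xi\circ E_G)|_{C}\|\le\frac{1}{|G|}\sum_g\|\xi|_{gCg^*}\|$. That is a lower bound on the average, which cannot locate one small conjugate.
\end{enumerate}

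More seriously, no choice of $A$ can achieve the constant $\tfrac12$, because the statement itself fails. Let $M=M_2(\C)\otimes M_2(\C)$ and $P=M_2(\C)\otimes 1$. Let $\xi=\mathrm{Tr}(h\,\cdot)$ with $h=\sum_{a\in\{x,y,z\}}\sigma_a\otimes\sigma_a=2\,\mathrm{SWAP}-1$, where the $\sigma_a$ are the Pauli matrices. The spectrum of $h$ is $\{1,1,1,-3\}$. Hence $\|\xi\|=6$ and $\|\xi|_{P'\cap M}\|=\|(\mathrm{Tr}\otimes\mathrm{id})(h)\|_1=0$, so the right-hand side of the lemma is $3$.

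Every masa of $P$ has the form $UDU^*\otimes 1$. Its relative commutant is $UDU^*\otimes M_2(\C)=(U\otimes U)(D\otimes M_2(\C))(U\otimes U)^*$. Since $h$ commutes with $U\otimes U$, the restriction norm is $\|E_{D\otimes M_2(\C)}(h)\|_1=\|\sigma_z\otimes\sigma_z\|_1=4>3$. The choice $A=\C$ gives $6>3$, and any other abelian $A$ has the same relative commutant as one of these.

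So here every masa achieves exactly the ratio $\tfrac23$, and any correct version of the lemma must weaken the constant. The later iteration only needs some uniform constant $c<1$. Proving such a constant requires an idea that neither your sketch nor the paper's argument supplies.
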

\begin{proof}
Suppose first that $P$ is a factor, i.e.\ $P= M_n(\C)$ for some $n \geq 1$. Let $D$ be the diagonal subalgebra of $P$ and $u$ the cyclic permutation matrix of order $n$. Note that $uDu^*=D$ and that the algebra $P$ is generated by $D$ and $u$. Let $C \subset P$ be the abelian subalgebra generated by $u$. 

There is a unique normal conditional expectation $E : M \rightarrow D' \cap M$ and since $u$ normalizes $D$ and $D' \cap M$ we have $uE(u^*xu)u^*=E(x)$ for all $x \in M$. In particular, if $x \in C' \cap M$ then $E(x) \in C' \cap M$. Therefore, we have 
$$E(C' \cap M ) \subset C' \cap D' \cap M=P' \cap M.$$
This shows that 
$$\| \xi \circ E|_{C' \cap M} \|  \leq \| \xi |_{P' \cap M} \|$$
hence
$$\| \xi |_{C' \cap M} \| \leq \| \xi - \xi \circ E \| + \| \xi \circ E|_{C' \cap M} \| \leq \| \xi -\xi \circ E\| + \| \xi |_{P' \cap M} \|$$
On the other hand, we have 
$$ \| \xi \| = \| \xi - \xi \circ E \| + \| \xi \circ E \| = \| \xi - \xi \circ E \|  + \| \xi |_{D' \cap M} \|.$$
Therefore
$$ \| \xi |_{D' \cap M} \| +  \| \xi |_{C' \cap M} \|  \leq \| \xi \| + \| \xi|_{P' \cap M} \|.$$ 
This inequality implies that either 
$$ \| \xi |_{D' \cap M} \| - \| \xi|_{P' \cap M} \| \leq \frac{1}{2} \left( \| \xi \| - \| \xi|_{P' \cap M} \|    \right) $$
or
$$ \| \xi |_{C' \cap M} \| - \| \xi|_{P' \cap M} \| \leq \frac{1}{2} \left( \| \xi \| - \| \xi|_{P' \cap M}   \| \right)$$
hence we can take $A=D$ or $A=C$ accordingly.

Now, suppose that $P$ is an arbitrary finite dimensional subalgebra of $M$. Let $p$ be a minimal projection in $\cZ(P)$. Then by the first part, for every minimal projection $p$ in $\cZ(P)$, we can find an abelian subagebra $A_p \subset pPp$ such that
\begin{equation} \label{inquality Ap}
\| \xi |_{A_p' \cap pMp} \| - \| \xi|_{(pPp)' \cap pMp} \| \leq  \frac{1}{2} \left( \| \xi|_{pMp} \| - \| \xi|_{(pPp)' \cap pMp}    \| \right).
\end{equation}
Let $A=\bigoplus_p A_p$ where $p$ runs through all  minimal projections of $\cZ(P)$. Then we have (see \cite[Lemma 2.1]{Po81})
$$A' \cap M=\bigoplus_p A_p' \cap pMp,$$
$$ P' \cap M =\bigoplus_p  (pPp)' \cap pMp,$$
$$ \cZ(P)' \cap M =\bigoplus_p  pMp,$$
hence by summing up all the inequalities \ref{inquality Ap}, and using the fact that $\| \xi|_{\cZ(P)' \cap M} \| \leq \| \xi \|$ we obtain the inequality \ref{inequality finite dimensional abelian}.
\end{proof}

\begin{lem} \label{amenable to finite dimensional abelian}
Let $M$ be a von Neumann algebra and let $P \subset M$ be an AFD subalgebra. Then for every $\xi \in M_*$, there exists a finite dimensional abelian subalgebra $A \subset P$ such that
\begin{equation} 
\| \xi |_{A' \cap M} \| - \| \xi|_{P' \cap M} \| \leq  \frac{2}{3} \left( \| \xi \| - \| \xi|_{P' \cap M}    \| \right).
\end{equation} 
\end{lem}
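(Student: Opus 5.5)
Since $P$ is AFD, we can write $P = \overline{\bigcup_n P_n}^{\text{weak*}}$ for an increasing sequence of finite dimensional subalgebras $P_n$ (separability is not really needed: an upward directed net of finite dimensional subalgebras with weak* dense union suffices). The plan is to approximate $P$ by one $P_n$ with an error we can absorb into the constant, then invoke Lemma \ref{finite dimensional to finite dimensional abelian} for that $P_n$.

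\textbf{Step 1: norms of restrictions converge.} The relative commutants $P_n' \cap M$ decrease, and their intersection is $P' \cap M$, because commuting with each $P_n$ means commuting with the weak*-dense union. We claim that
$$\| \xi|_{P_n' \cap M}\| \to \| \xi|_{P' \cap M}\|.$$
This is the main obstacle, since restriction norms need not be continuous along decreasing intersections of subalgebras in general.

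To handle it, we first try a compactness argument. Choose $x_n$ in the unit ball of $P_n' \cap M$ with $|\xi(x_n)| \geq \| \xi|_{P_n' \cap M}\| - 1/n$. Take a weak* cluster point $x$. For each fixed $m$, the elements $x_n$ with $n \geq m$ lie in $P_m' \cap M$, which is weak* closed, so $x \in P_m' \cap M$ for every $m$. Hence $x \in P' \cap M$, it lies in the unit ball, and, $\xi$ being normal,
$$|\xi(x)| = \lim_n \| \xi|_{P_n' \cap M}\|.$$
This gives $\lim_n \| \xi|_{P_n'\cap M}\| \leq \| \xi|_{P'\cap M}\|$. The reverse inequality is immediate, since $P'\cap M \subset P_n'\cap M$.

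\textbf{Step 2: choose $n$.} Set $\delta = \| \xi \| - \| \xi|_{P' \cap M}\| \geq 0$. If $\delta = 0$, take $A = \C 1$. Then $A'\cap M = M$ and the left-hand side is $\|\xi\| - \|\xi|_{P'\cap M}\| = 0$, so the inequality holds.

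If $\delta > 0$, use Step 1 to pick $n$ with $\| \xi|_{P_n' \cap M}\| \leq \| \xi|_{P' \cap M}\| + \epsilon$, where $\epsilon = \delta/3$.

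\textbf{Step 3: apply the finite dimensional lemma.} Lemma \ref{finite dimensional to finite dimensional abelian} applied to $P_n$ gives a finite dimensional abelian $A \subset P_n \subset P$ with
$$\| \xi|_{A'\cap M}\| \leq \tfrac12\| \xi\| + \tfrac12 \|\xi|_{P_n'\cap M}\| \leq \tfrac12 \|\xi\| + \tfrac12\|\xi|_{P'\cap M}\| + \tfrac{\epsilon}{2}.$$
Subtracting $\|\xi|_{P'\cap M}\|$ from both sides yields
$$\| \xi|_{A'\cap M}\| - \|\xi|_{P'\cap M}\| \leq \tfrac{\delta}{2} + \tfrac{\delta}{6} = \tfrac{2}{3}\delta,$$
which is exactly the required inequality.
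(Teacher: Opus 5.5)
Your proof is correct and follows the paper's argument. You pick a finite dimensional $P_n \subset P$ with $\|\xi|_{P_n'\cap M}\| \leq \|\xi|_{P'\cap M}\| + \varepsilon$, apply Lemma \ref{finite dimensional to finite dimensional abelian} to it, and absorb the resulting $\varepsilon/2$ error into the constant $\frac{2}{3}$. Where the paper simply asserts the approximation step ``since $P$ is AFD'', you justify it by weak*-compactness, and you also treat the trivial case $\|\xi\| = \|\xi|_{P'\cap M}\|$ separately.
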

\begin{proof}
Take $\varepsilon > 0$. Since $P$ is AFD, we can find a finite dimensional subalgebra $P_0 \subset P$ such that 
$$ \| \xi |_{P_0' \cap M} \|  \leq \| \xi|_{P' \cap M}    \| + \varepsilon.$$
By Lemma \ref{finite dimensional to finite dimensional abelian}, we can find a finite dimensional abelian subalgebra $A \subset P_0$ such that 
$$ \| \xi |_{A' \cap M} \| - \| \xi|_{P_0' \cap M} \| \leq  \frac{1}{2} \left( \| \xi \| - \| \xi|_{P_0' \cap M}    \| \right).$$
Since 
$$ \| \xi |_{P' \cap M} \|   \leq \| \xi |_{P_0' \cap M} \|  \leq \| \xi|_{P' \cap M}    \| + \varepsilon$$
we conclude that 
$$ \| \xi |_{A' \cap M} \| - \| \xi|_{P' \cap M} \| -  \varepsilon \leq  \frac{1}{2} \left( \| \xi \| - \| \xi|_{P' \cap M}    \| \right)$$
which implies the desired inequality if $\varepsilon$ is small enough.
\end{proof}

\begin{lem}
Let $P \subset M$ be an inclusion of von Neumann algebras where $P$ is AFD. Then for every $\xi \in  M_*$, every $\varepsilon > 0$ and every finite dimensional abelian subalgebra $A_0 \subset P$, there exists a finite dimensional abelian subalgebra $A \subset P$ such that $A_0 \subset A$ and
\begin{equation} 
\| \xi |_{A' \cap M} \| - \| \xi|_{A \vee (P' \cap M)} \| \leq  \varepsilon.
\end{equation} 
\end{lem}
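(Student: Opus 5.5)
The idea is to iterate Lemma \ref{amenable to finite dimensional abelian} inside the corners cut out by the minimal projections of the current abelian algebra, and to show that each step shrinks the defect by a factor $2/3$. For a finite dimensional abelian subalgebra $A \subset P$, set
$$\delta(A) := \| \xi|_{A' \cap M} \| - \| \xi|_{A \vee (P' \cap M)} \| \geq 0 .$$
This is nonnegative because $A \vee (P' \cap M) \subset A' \cap M$. It is enough to show that for every finite dimensional abelian $A_0 \subset P$ there is a finite dimensional abelian $A_1 \subset P$ with $A_0 \subset A_1$ and $\delta(A_1) \leq \frac{2}{3}\delta(A_0)$. Iterating $n$ times then gives $\delta(A_n) \leq (2/3)^n \cdot 2\|\xi\|$, which is $\leq \varepsilon$ for $n$ large.

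For the one-step improvement, let $e_1,\dots,e_k$ be the minimal projections of $A_0$, which lie in $P$ and sum to $1$. Each $e_iPe_i$ is an AFD subalgebra of $e_iMe_i$. I will use the following standard identities, as in \cite[Lemma 2.1]{Po81}:
$$A_0' \cap M = \bigoplus_i e_iMe_i, \qquad A_0 \vee (P' \cap M) = \bigoplus_i (P' \cap M)e_i = \bigoplus_i (e_iPe_i)' \cap e_iMe_i .$$
The second one uses that $A_0 \subset P$ commutes with $P' \cap M$, together with the corner formula $(pPp)' \cap pMp = (P' \cap M)p$ for projections $p \in P$. I expect verifying this corner formula cleanly to be the only real obstacle. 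It follows from the fact that an element of $pMp$ commuting with $pPp$ extends, via partial isometries in $P$ moving $p$ under its central support, to an element of $P' \cap M$.

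Now apply Lemma \ref{amenable to finite dimensional abelian} in each corner, with ambient algebra $e_iMe_i$, AFD subalgebra $e_iPe_i$, and functional $\xi_i = \xi|_{e_iMe_i}$. This gives finite dimensional abelian $B_i \subset e_iPe_i$ with
$$\|\xi_i|_{B_i' \cap e_iMe_i}\| - \|\xi_i|_{(e_iPe_i)' \cap e_iMe_i}\| \leq \tfrac{2}{3}\left(\|\xi_i\| - \|\xi_i|_{(e_iPe_i)' \cap e_iMe_i}\|\right).$$
Put $A_1 = \bigoplus_i B_i$. It is finite dimensional abelian, lies in $P$, and contains $A_0$ because each $e_i \in B_i$. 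Moreover $A_1' \cap M = \bigoplus_i B_i' \cap e_iMe_i$.

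Norms of restrictions of $\xi$ to a direct sum $\bigoplus_i e_iMe_i$ of corners add up. So summing the inequalities over $i$ gives
$$\|\xi|_{A_1' \cap M}\| - \|\xi|_{A_0 \vee (P' \cap M)}\| \leq \tfrac{2}{3}\,\delta(A_0).$$
Finally, $A_0 \vee (P' \cap M) \subset A_1 \vee (P' \cap M)$, so the left side dominates $\delta(A_1)$. This gives $\delta(A_1) \leq \frac{2}{3}\delta(A_0)$, which completes the induction step and hence the proof.
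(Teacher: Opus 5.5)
Your proof is correct and follows essentially the same approach as the paper. Both arguments apply Lemma \ref{amenable to finite dimensional abelian} in each corner $e_iPe_i \subset e_iMe_i$ cut out by the minimal projections of $A_0$, sum the inequalities using the direct-sum decompositions of $A_1'\cap M$, $A_0\vee(P'\cap M)$ and $A_0'\cap M$, and iterate the resulting $2/3$-contraction of the defect. The paper simply cites these decompositions from \cite[Lemma 2.1]{Po81}, whereas you also sketch the corner formula $(pPp)'\cap pMp=(P'\cap M)p$.
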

\begin{proof}
By Lemma \ref{amenable to finite dimensional abelian}, for every minimal projection $p \in A_0$, we can find a finite dimensional abelian subalgebra $A_p  \subset pPp$ such that 
\begin{equation} \label{Ap inequality}
\| \xi|_{A_p' \cap pMp} \| - \| \xi|_{(pPp)' \cap pMp } \| \leq \frac{2}{3} \left( \| \xi|_{pMp} \| - \| \xi|_{(pPp)' \cap pMp}  \| \right)
\end{equation} 
Let $A_1 = \bigoplus_{p} A_p$ where $p$ runs through the minimal projections of  $A_0$.
Then $A_0 \subset A_1$ and we have $$A_1' \cap M=\bigoplus_{p} A_p' \cap pMp,$$
$$ A_0 \vee (P' \cap M) =\bigoplus_p  (pPp)' \cap pMp,$$
$$ A_0' \cap M =\bigoplus_p  p Mp.$$
Thus if we sum up the inequalities \ref{Ap inequality} over all minimal projections $p$, we obtain
$$\| \xi |_{A_1' \cap M} \| - \| \xi|_{A_0 \vee (P' \cap M)} \| \leq  \frac{2}{3} \left( \| \xi |_{A_0' \cap M}\| - \| \xi|_{A_0 \vee (P' \cap M)} \|    \right).$$
and since
$$ \| \xi|_{A_1 \vee (P' \cap M)} \| \geq \| \xi|_{A_0 \vee (P' \cap M)} \|, $$  we get 
$$\| \xi |_{A_1' \cap M} \| - \| \xi|_{A_1 \vee (P' \cap M)} \| \leq  \frac{2}{3} \left( \| \xi |_{A_0' \cap M}\| - \| \xi|_{A_0 \vee (P' \cap M)} \|    \right).$$
By iterating this construction, we obtain an increasing sequence $(A_n)_{n \in \N}$ of finite dimensional abelian subalgebras of $P$ such that for every $n \in \N$ we have
$$\| \xi |_{A_{n+1}' \cap M} \| - \| \xi|_{A_{n+1} \vee (P' \cap M)} \| \leq  \frac{2}{3} \left( \| \xi |_{A_n' \cap M}\| - \| \xi|_{A_n \vee (P' \cap M)} \|    \right)$$
hence 
$$\| \xi |_{A_{n}' \cap M} \| - \| \xi|_{A_{n} \vee (P' \cap M)} \| \leq  \left( \frac{2}{3} \right)^n \left( \| \xi |_{A_0' \cap M}\| - \| \xi|_{A_0 \vee (P' \cap M)} \|    \right).$$
The conclusion follows by taking $A=A_n$ for $n$ large enough.
\end{proof}

\begin{thm} \label{kadison AFD}
Let $N \subset M$ be an inclusion of von Neumann algebras with separable preduals. Suppose that $N$ is AFD. There exists a maximal abelian subalgebra $A \subset N$ such that $A' \cap M=A \vee (N' \cap M)$. In particular, if $N' \cap M \subset N$ then $A$ is maximal abelian in $M$.
\end{thm}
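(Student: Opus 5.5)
We build $A$ as the weak$^*$-closure of an increasing union of finite dimensional abelian subalgebras $A_0 \subset A_1 \subset \cdots \subset N$, produced by the preceding lemma (applied with $P=N$, which is AFD). Since $M$ has separable predual, fix a norm dense sequence $(\xi_k)_{k\in\N}$ in $M_*$. We also fix an increasing sequence of finite dimensional subalgebras $N_k \subset N$ with $\bigvee_k N_k = N$, which exists because $N$ is AFD.

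At step $n$ we first enlarge $A_{n-1}$ to a finite dimensional abelian subalgebra $A_{n-1}' \subset N$. We choose it maximal abelian in the finite dimensional algebra generated by $A_{n-1}$ together with a suitable finite piece of $N$, namely a finite dimensional subalgebra of $(A_{n-1}'\cap N)$ approximating it; this is the bookkeeping that makes the final $A$ maximal abelian in $N$. We then apply the preceding lemma to $A_{n-1}'$ successively for $\xi_1,\dots,\xi_n$ with $\varepsilon = 2^{-n}$, always enlarging the algebra obtained so far. This gives $A_n$ with
$$\| \xi_k|_{A_n'\cap M}\| - \|\xi_k|_{A_n \vee (N'\cap M)}\| \le 2^{-n}$$
for $k \le n$. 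The point needing care is that enlarging the algebra again for $\xi_{k+1}$ should not destroy the estimate for $\xi_k$. Taking $A \subset B$, we have $B'\cap M \subset A'\cap M$, so the first term decreases. The second term $\|\xi|_{B\vee(N'\cap M)}\|$ increases, since $A\vee(N'\cap M) \subset B\vee(N'\cap M)$. Hence the difference is monotone nonincreasing under enlargement, and each later step only improves the earlier estimates.

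Let $A = \big(\bigcup_n A_n\big)''$, which is abelian. Since $A'\cap M \subset A_n'\cap M$ and $A_n\vee(N'\cap M) \subset A\vee(N'\cap M)$, monotonicity gives, for each $k$,
$$\| \xi_k|_{A'\cap M}\| \le \|\xi_k|_{A\vee(N'\cap M)}\|.$$
By density of $(\xi_k)$ and norm continuity of restriction, this holds for all $\xi \in M_*$. Put $Q = A'\cap M \supset B := A\vee(N'\cap M)$. Here $B \subset Q$ because $A$ is abelian and commutes with $N'\cap M$.

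The main step is to deduce $Q = B$ from the equality $\|\xi|_Q\| = \|\xi|_B\|$ for all $\xi \in M_*$. Every $\psi\in Q_*$ extends to some $\xi \in M_*$ with the same norm, so $\|\psi\| = \|\psi|_B\|$ for all $\psi\in Q_*$. If $B \neq Q$, Hahn--Banach applied in the predual pairing gives a nonzero $\psi \in Q_*$ vanishing on $B$, which is a contradiction. Thus $A'\cap M = A\vee(N'\cap M)$.

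It remains to see that $A$ is maximal abelian in $N$. Intersecting with $N$ gives $A'\cap N = (A\vee(N'\cap M))\cap N$. Ensuring that this equals $A$ is exactly what the interleaved steps are for. Those steps force $A'\cap N = A$ by a standard Popa-type bookkeeping: every element of $A'\cap N$ is approximated by elements of the $N_k$ commuting approximately with $A_n$, and these are absorbed at later stages. A simpler route is to include in the sequence $(\xi_k)$ states that detect this; alternatively, one can take $A$ maximal among abelian subalgebras satisfying the conclusion, noting that enlarging $A$ inside $A'\cap N$ preserves the inclusion $A'\cap M \subset A\vee(N'\cap M)$ by the same estimates. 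I expect this last point, arranging maximality in $N$ simultaneously with the relative commutant condition, to be the main obstacle. Finally, if $N'\cap M \subset N$, then $A'\cap M = A\vee(N'\cap M) \subset N$, so $A'\cap M = A'\cap N = A$, and $A$ is maximal abelian in $M$.
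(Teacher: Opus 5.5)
Your proof of $A'\cap M = A\vee(N'\cap M)$ is the paper's argument: you iterate the preceding lemma along a dense sequence in $M_*$, pass to $A=\bigvee_n A_n$ using monotonicity, and conclude with Hahn--Banach. For maximality, your ``alternatively'' remark is exactly how the paper finishes: take any maximal abelian subalgebra $B\subset N$ containing $A$, and then $B'\cap M\subset A'\cap M = A\vee(N'\cap M)\subset B\vee(N'\cap M)\subset B'\cap M$. So the step you flag as the main obstacle is just this one-line inclusion chain. The interleaved bookkeeping with the $N_k$ is not an actual argument as sketched, and is unnecessary; you should drop it.
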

\begin{proof}
Let $(\xi_n)_{n \in \N}$ be a dense sequence in $M_*$. We can construct inductively an increasing sequence $(A_n)_{n \in \N}$ of finite dimensional abelian subalgebras of $P$ such that
$$ \| \xi_n|_{A_n' \cap M} \| - \| \xi_n|_{A_{n} \vee (P' \cap M)} \| \leq 2^{-n}.$$
Let $A=\bigvee_{n \in \N} A_n$. Then for every $n \geq 1$, we have
$$ \| \xi_n|_{A' \cap M} \| - \| \xi_n|_{A \vee (P' \cap M)} \| \leq \| \xi_n|_{A_n' \cap M} \| - \| \xi_n|_{A_{n} \vee (P' \cap M)} \| \leq  2^{-n}.$$
Since $(\xi_n)_{n \in \N}$ is dense in $M_*$, it follows that $$\| \xi|_{A' \cap M} \|  = \| \xi|_{A \vee (P' \cap M)} \|$$ for all $\xi \in M_*$. By the Hahn-Banach theorem, we conclude that $A' \cap M = A \vee (P' \cap M)$. But $A$ is not necessarily maximal abelian. Take $B$ any maximal abelian subalgebra of $P$ that contains $A$. Then we have 
$$ B' \cap M \subset A' \cap M=A \vee (P' \cap M) \subset B \vee (P' \cap M)$$
and since the reverse inclusion is obvious, we conclude that $B' \cap M =B \vee (P' \cap M)$.
\end{proof}

\section{Ergodicity of the relative bicentralizer flow}
Let $N \subset M$ be an inclusion of von Neumann algebras with a faithful normal conditional expectation $E_N : M \rightarrow N$. We suppose that $N$ is of type $\III_1$ (not necessarily a factor). We choose a state $\varphi \in M_*$ such that $\varphi=\varphi \circ E_N$. For every subalgebra $A \subset M$ that is globally invariant under $\sigma^\varphi$ we denote by $E_A : M \rightarrow A$ the unique $\varphi$-preseving conditional expectation.

Let $\lambda,\rho : M \rightarrow \B(\rL^2(M))$ denote the left and right actions of $M$ in its standard form \cite{Ha75}. We also use the bimodule notation $\lambda(a)\rho(b)\xi = a \xi b$ for all $\xi \in \rL^2(M)$ and $a,b \in M$. We denote by $\varphi^{1/2} \in \rL^2(M)$ the unique vector in the positive cone of $\rL^2(M)$ that satisfies $\langle a \varphi^{1/2},\varphi^{1/2} \rangle = \langle \varphi^{1/2}a,\varphi^{1/2} \rangle=\varphi(a) $
for all $a \in M$. We let $e_N \in \B(\rL^2(M))$ denote the Jones projection associated to $E_N$, that is the projection on $\overline{N \varphi^{1/2}}=\overline{\varphi^{1/2} N} \cong \rL^2(N)$.

We refer to \cite{AH12} for the theory of ultraproducts of von Neumann algebras, and we refer to \cite{AHHM18} and \cite{Ma25} for the definition of the relative bicentralizer, the relative bicentralizer flow and their main properties.

The following lemma is an application of the ultrapower implementation of binormal states technique developped in \cite[Section 2]{Ma25}. Thanks to this lemma, one can prove powerful results on the bicentralizer by constructing states on $\B(\rL^2(M))$.
\begin{lem} \label{binormal state bicentralizer}
Suppose that $\Phi \in \B(\rL^2(M))^*$ is a state that satisfies 
\begin{enumerate}[(a)]
\item  $\Phi \circ \lambda = \Phi \circ \rho=\varphi$.
\item $\Phi(e_N)=1$. 
\item $\Phi(f(\log\Delta_\varphi))=f(0)$ for every $f \in C_0(\R)$.
\end{enumerate}
Then we have $$\Phi(\lambda(a)\rho(b))=\langle a \varphi^{1/2} b , \varphi^{1/2} \rangle$$ for all $a,b \in \rB(N \subset M, \varphi)$.
\end{lem}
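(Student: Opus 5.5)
We will use the ultrapower implementation of binormal states from \cite[Section 2]{Ma25}. Condition (a) makes $\Phi$ binormal, with both marginals $\varphi$. The implementation theorem then gives an ultrafilter $\omega$ and a sequence of unit vectors $\xi_n \in \rL^2(M)$ such that
$$\Phi(T)=\lim_{n\to\omega}\langle T\xi_n,\xi_n\rangle$$
for all $T$ in the C$^*$-algebra generated by $\lambda(M)$, $\rho(M)$, $e_N$ and $f(\log\Delta_\varphi)$, $f\in C_0(\R)$. More concretely, we aim to realize $\Phi$ as a vector state on the ultrapower $\rL^2(M)^\omega$ or on $\rL^2(M^\omega)$, given by a vector $\xi=(\xi_n)^\omega$ with the following properties:
\begin{enumerate}[(i)]
\item by (a), $\xi$ implements $\varphi$ on the left and on the right;
\item by (b), we may take $\xi_n\in \rL^2(N)$, so $\xi_n = \varphi^{1/2}$-type vectors coming from $N$;
\item by (c), the spectral measure of $\log\Delta_\varphi$ in $\xi$ concentrates at $0$.
\end{enumerate}

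The key steps are as follows. First, using (b) and (i), write $\xi_n=\varphi^{1/2}$ twisted by elements of $N$: $\xi_n = x_n\varphi^{1/2}$ with $(x_n)$ defining an element of $N^\omega$, after a polar/Connes--Størmer type approximation. We expect that (a) forces $x_n$ to be asymptotically unitary-like, with $\varphi\circ\Ad$ behaviour. Second, use (c) to deduce that $\|\Delta_\varphi^{1/2}\xi_n-\xi_n\|\to 0$ and $\|\Delta_\varphi^{\ri t}\xi_n-\xi_n\|\to0$ uniformly on compacts in $t$. Hence $u=(x_n)^\omega$ lies in the centralizer $(N^\omega)_{\varphi^\omega}$, and $u\varphi^{1/2}=\varphi^{1/2}u$ in $\rL^2(M^\omega)$. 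Combining this with (a), which gives $\varphi^\omega(u^*au)=\varphi(a)$, we conclude that $u$ is a unitary in $(N^\omega)_{\varphi^\omega}$.

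Third, recall that $\rB(N\subset M,\varphi)=((N^\omega)_{\varphi^\omega})'\cap M$. For $a,b\in \rB(N\subset M,\varphi)$, both $a$ and $b$ commute with $u$, and $u$ commutes with $\varphi^{1/2}$. Therefore
$$\Phi(\lambda(a)\rho(b))=\langle a u\varphi^{1/2} b, u\varphi^{1/2}\rangle=\langle u a\varphi^{1/2}b,u\varphi^{1/2}\rangle=\langle a\varphi^{1/2}b,\varphi^{1/2}\rangle.$$

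The main obstacle is the first two steps: showing that the implementing vectors can be chosen of the form $x_n\varphi^{1/2}$ with $(x_n)$ bounded and $\omega$-equicontinuous, so that they define an element of $N^\omega$. Condition (c) should supply exactly this. Spectral concentration of $\log\Delta_\varphi$ near $0$ lets us cut $\xi_n$ by $f(\log\Delta_\varphi)$, with $f$ compactly supported near $0$; such vectors lie in the domain of $\Delta_\varphi^{\pm1/2}$ with controlled norms. This gives bounded elements via $\xi\mapsto x$ with $x\varphi^{1/2}=\varphi^{1/2}\sigma_{-\ri/2}(x)$, following \cite[Section 2]{Ma25}. We would try first to quote the ultrapower implementation theorem of \cite{Ma25} directly, since its hypotheses are precisely (a) and (c). If that fails, we would redo the truncation argument, at the cost of an $\varepsilon$ error that vanishes as the support of $f$ shrinks.
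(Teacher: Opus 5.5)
Your overall route is the paper's: implement $\Phi$ by a vector in an ultrapower, use (b) to land in the ultrapower of $N$ and (c) to land in the $\rL^2$-space of the centralizer, then move $a$ across the vector. But the step that closes your argument is false: (a) does not make $u$ unitary. Since $u$ is $\varphi^\omega$-central, the two halves of (a) only say $E_M(uu^*)=E_M(u^*u)=1$, where $E_M : M^\omega\to M$ is the canonical expectation. For instance, if $N$ is McDuff, take a projection $p\in N'\cap N^\omega\subset (N^\omega)_{\varphi^\omega}$ with $\varphi^\omega(p)=1/2$; then $u=\sqrt{2}\,p$ satisfies this and is not unitary. So your last equality $\langle ua\varphi^{1/2}b,u\varphi^{1/2}\rangle=\langle a\varphi^{1/2}b,\varphi^{1/2}\rangle$, which uses $u^*u=1$, is unjustified.

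The step before it is not available either. Implementation plus (c) only gives a vector in the closure of $(N^\omega)_{\varphi^\omega}\varphi^{1/2}$, not one of the form $x\varphi^{1/2}$ with $x$ bounded. Truncating by $f(\log\Delta_\varphi)$ cannot create boundedness: every vector of that closure is already $\Delta^{\ri t}$-fixed, yet in general it is not of that bounded form. Nor does (c) control $\|\Delta_\varphi^{1/2}\xi_n-\xi_n\|$, since $\Delta_\varphi^{1/2}$ is unbounded.

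Finally, \cite[Theorem 2.9]{Ma25} does not produce a sequence in $\rL^2(M)$. It realizes $\Phi$ by $\xi\in\rL^2((A\ovt M)^\omega)$, for an auxiliary abelian $A$ and a cofinal ultrafilter on a directed set. This amplification is what accommodates mixed states, such as normal mixed states, which can satisfy (a)--(c). If you quote it, you also need $1\otimes a\in\rB(A\ovt N\subset A\ovt M,\phi)$ \cite[Proposition 7.8]{Ma25}.

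Neither unitarity nor boundedness is needed. What replaces them is the right-hand half of (a), combined with an analytic twist. Take $a\in\rB(N\subset M,\varphi)$ that is $\sigma^\varphi$-analytic, and set $a'=\sigma^\varphi_{\ri/2}(a)$, so that $a\varphi^{1/2}=\varphi^{1/2}a'$. For $x$ in the centralizer,
\[
(1\otimes a)x\phi^{1/2}=x(1\otimes a)\phi^{1/2}=x\phi^{1/2}(1\otimes a').
\]
By density, $(1\otimes a)\xi=\xi(1\otimes a')$ for the implementing vector $\xi\in\rL^2((A\ovt N)^\omega_{\phi^\omega})$; this is \cite[Lemma 7.16]{Ma25}. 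Then
\[
\Phi(\lambda(a)\rho(b))=\langle (1\otimes a)\xi(1\otimes b),\xi\rangle=\langle \xi(1\otimes a'b),\xi\rangle=\Phi(\rho(a'b))=\varphi(a'b)=\langle a\varphi^{1/2}b,\varphi^{1/2}\rangle
\]
for every $b\in M$, so $b$ need not lie in the bicentralizer. Normality of $\Phi\circ\lambda=\varphi$ then removes the analyticity assumption on $a$. This is exactly how the paper concludes.
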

\begin{proof}
The proof is similar to \cite[Theorem 7.17]{Ma25}. By \cite[Theorem 2.9]{Ma25}, we can find some abelian von Neumann algebra $A$, a cofinal ultrafilter $\omega$ on some directed set $I$ and a vector $\xi \in \rL^2((A \ovt M)^\omega)$ such that
$$ \langle (1 \otimes T)^\omega \xi,\xi \rangle = \Phi(T)$$
for all $T \in \B(\rL^2(M))$. In fact, if we choose some faithful normal state $\mu$ on $A$ and let $\phi=\mu \otimes \varphi$, we also have $\xi \in \rL^2((A \ovt M)^\omega_{\phi^{\omega}})$. Since $(1 \otimes e_N)^\omega$ projects $\rL^2((A \ovt M)^\omega)$ onto $\rL^2((A \ovt N)^\omega)$, the condition $\Phi(e_N)=1$ implies that $\xi=(1 \otimes e_N)^\omega \xi \in \rL^2((A \ovt N)^\omega)$. Thus, we have $\xi \in \rL^2((A \ovt N)^\omega_{\phi^{\omega}})$. 

Take $a \in\rB(N \subset M, \varphi)$ such that $a$ is $\sigma^{\varphi}$-analytic. Then $a \varphi^{1/2}=\varphi^{1/2}a'$ where $a'=\sigma_{\ri/2}(a) \in\rB(N \subset M, \varphi)$.

Since $a \in\rB(N \subset M, \varphi)$, we have $1 \otimes a \in \rB(A \ovt N \subset A \ovt M, \phi)$ by \cite[Proposition 7.8]{Ma25}. Moreover, we have $(1 \otimes a) \phi^{1/2} =\phi^{1/2} (1 \otimes a')$. By \cite[Lemma 7.16]{Ma25} (applied to the inclusion $(A \ovt M)^\omega_{\phi^{\omega}} \subset (A \ovt M)^\omega$), we obtain $(1 \otimes a)\xi=\xi (1 \otimes a')$.

Then for every $b \in M$, we get
$$ \Phi(\lambda(a)\rho(b))=\langle (1 \otimes a) \xi (1 \otimes b) , \xi \rangle = \langle \xi (1 \otimes a'b), \xi \rangle = \Phi(\rho(a'b))=\varphi(a'b)=\langle a \varphi^{1/2} b , \varphi^{1/2} \rangle.$$
This shows the desired property when $a $ is $\sigma^\varphi$-analytic. Since $\Phi$ is normal on $\lambda(M)$, we get the desired conclusion for an arbitrary $a \in M$ by approximating it with $\sigma^\varphi$-analytic elements.
\end{proof}

Now, in order to prove Theorem \ref{main theorem}, we only need to construct a state $\Phi \in \B(\rL^2(M))^*$ that satisfies the assumption of Lemma \ref{binormal state bicentralizer} and such that
$$\Phi(\lambda(a)\rho(b))=\langle E_{N' \cap M}(a) \varphi^{1/2} b, \varphi^{1/2} \rangle$$ for all $a,b \in \rB(N \subset M, \varphi)^{\beta^\varphi}$. We will construct it step by step by using the following lemma. 

\begin{lem} \label{ucp maps}
Let $\UCP(\B(\rL^2(M))$ denote the set of all unital completely positive maps  (not necessarily normal) on $\B(\rL^2(M))$.
\begin{enumerate}[(1)]
\item There exists $\zeta \in \UCP(\B(\rL^2(M))$ such that
 \begin{enumerate}
 \item  $\zeta(e_N)=e_N$  
  \item $\zeta \circ \lambda=\lambda  \circ E_{\rB(N \subset M,\varphi)}$.
 \item $\zeta \circ \rho =\rho$.
 \item $\zeta(f(\log \Delta_\varphi))=f(\log \Delta_\varphi)$ for every $f \in C_0(\R)$.
\end{enumerate}

\item For every $t \in \R$, there exists $\theta_t \in \UCP(\B(\rL^2(M))$ such that

 \begin{enumerate}
 \item  $\theta_t(e_N)=e_N$  
  \item $\theta_t \circ \lambda=\lambda \circ \beta_{e^t}^\varphi \circ E_{\rB(N \subset M,\varphi)}$.
 \item $\theta_t \circ \rho =\rho$.
 \item $\theta_t(f(\log \Delta_\varphi))=f(\log \Delta_\varphi - t)$ for every $f \in C_0(\R)$.
\end{enumerate}

\item There exists $\pi \in \UCP(\B(\rL^2(M))$ such that
 \begin{enumerate}
 \item  $\pi(e_N)=e_N$  
  \item $\pi \circ \lambda=\lambda  \circ E_{\rB(N \subset M,\varphi)^{\beta^\varphi}}$.
 \item $\pi \circ \rho =\rho \circ E_{\rB(N \subset M,\varphi)^{\beta^\varphi}}$.
 \item $\pi(f(\log \Delta_\varphi))=f(\log \Delta_\varphi)$ for every $f \in C_0(\R)$.
\end{enumerate}
\end{enumerate}

\end{lem}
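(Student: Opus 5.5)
We will build all three maps as point-weak$^*$ limits of normal (or at least concrete) ucp maps of the form $T \mapsto \sum_i u_i T u_i^*$, or averages of such, where the $u_i$ are unitaries commuting with $\rho(M)$, with $e_N$ and with $\Delta_\varphi$. The set $\UCP(\B(\rL^2(M)))$ is compact in the point-weak$^*$ topology. Properties (a), (c) and (d) are closed conditions. Property (b) only involves the restriction to $\lambda(M)$, so we only need convergence there.

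\textbf{Part (1).} Recall from \cite{AHHM18, Ma25} that $\rB(N\subset M,\varphi)$ is the set of $x\in M$ with $[x,a_n]\to 0$ strongly for every bounded $(a_n)$ in $N$ with $\|a_n\varphi-\varphi a_n\|\to 0$. Equivalently, $\rB(N\subset M,\varphi) = ((N^\omega_{\varphi^\omega})' \cap M^\omega)\cap M$. We use the conditional expectation $E$ from $M$ onto $\rB(N\subset M,\varphi)$. As in \cite{Ma25}, it is obtained as a limit of averages $x\mapsto \sum_k \alpha_k u_k x u_k^*$ (or via ultrapower unitaries $u\in \cU(N^\omega_{\varphi^\omega})$). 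Here the $u_k$ are unitaries of $N$ that are almost $\varphi$-central, and $\varphi\circ E=\varphi$.

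We then define $\zeta_k(T)=\sum \alpha_j \lambda(u_j)T\lambda(u_j)^*$. On the nose $\lambda(u)$ commutes with $\rho(M)$ and with $e_N$, since $u\in N$. Also $\lambda(u)$ almost commutes with $\Delta_\varphi$ when $u$ is almost $\varphi$-central. The cleanest way to see this is to use the vector $u\varphi^{1/2}u^* \approx \varphi^{1/2}$. Alternatively, pass to $\lambda(u)\rho(u^*)$-type unitaries with $J\lambda(u)J$ correction, which commute exactly with $\Delta_\varphi$ when $u$ lies in the centralizer. The precise statement needed is $\lambda(u_n) f(\log\Delta_\varphi)\lambda(u_n)^* - f(\log\Delta_\varphi)\to 0$ weakly for $f\in C_0(\R)$ along a sequence with $\|u_n\varphi-\varphi u_n\|\to0$. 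This follows from approximate commutation of $\lambda(u_n)$ with $\Delta_\varphi^{\ri s}$: indeed $\Delta_\varphi^{\ri s}\lambda(u)\Delta_\varphi^{-\ri s}=\lambda(\sigma_s^\varphi(u))$, and $\sigma^\varphi_s(u_n)-u_n\to 0$ strongly uniformly on compacts (a standard consequence of \cite{Co74,Ha85}). Then pass through Fourier transform to $f$ with $\hat f\in L^1$. Taking a limit point gives $\zeta$.

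\textbf{Part (2).} We use the definition of $\beta^\varphi$ via approximate eigenvectors. For $\lambda=e^t$ there are unitaries/partial isometries $v_n\in N$ with $\|v_n\varphi - e^{t}\varphi v_n\|\to0$, i.e.\ approximate eigenoperators of $\sigma^\varphi$. Their existence comes from $N$ being type $\III_1$, by Connes--Størmer transitivity or \cite[Theorem 4.20]{AHHM18}. Then $\beta_{e^t}(x)=\lim v_n x v_n^*$ on $\rB$. Set $\theta_t=\lim \Ad\lambda(v_n)\circ\zeta$. Relation (a) holds since $v_n\in N$ commutes with $e_N$, and (c) holds since $\lambda(v_n)\in\rho(M)'$. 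For (d), $\Delta_\varphi^{\ri s}\lambda(v_n)\Delta_\varphi^{-\ri s}=\lambda(\sigma_s(v_n))\approx e^{\ri st}\lambda(v_n)$, which shifts the spectrum of $\log\Delta_\varphi$ by $t$. If $v_n$ are not unitaries one uses isometry versions via ultrapowers, which is the main technical nuisance.

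\textbf{Part (3).} We obtain $\pi$ by averaging: take a Banach limit / invariant mean $m$ on $\R$ and set $\pi_0 = m_t(\theta_{-t}\circ\theta_t\ldots)$. A better route is to note that (d) is incompatible with shifting, so instead use conjugation by $\Delta_\varphi^{\ri s}$-commuting unitaries. The natural choice is the amenable flow $\beta$: take $\pi_1=\int \theta_t\circ\theta_{-t}$-type compositions. I would first try $\Theta_{t}=\theta'_{-t}\circ\theta_t$, where $\theta'$ uses $\rho(v_n)$ to undo the shift on $\Delta$ and act by $\beta$ on the right. Averaging over $t$ with a mean then projects onto fixed points on both sides. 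The main obstacle is making (b) and (c) hold simultaneously while keeping (d), i.e.\ ensuring the right-side action by $\rho(J\cdot J)$-type unitaries shifts $\log\Delta_\varphi$ by $-t$ exactly cancelling the left shift, together with verifying that $E_{\rB^{\beta}}$ is the ergodic mean of $\beta$ on $\rB$.
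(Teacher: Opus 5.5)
Your part (1) is the paper's argument: average $\Ad(\lambda(u))$ over almost $\varphi$-central unitaries $u\in N$, write $\lambda(u)\Delta_\varphi^{\ri s}\lambda(u)^*=\lambda(u\sigma^\varphi_s(u)^*)\Delta_\varphi^{\ri s}$, and combine Connes' estimate with a Fourier transform.

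In part (2) the operators you conjugate by do not work. No unitary can be an approximate eigenoperator for $t\neq 0$, because $\|v\varphi\|=\|\varphi v\|=1$ gives $\|v\varphi-e^t\varphi v\|\geq |e^t-1|$. Conjugating by a single partial isometry or isometry gives a map $T\mapsto\lambda(v_n)T\lambda(v_n)^*$ that is not unital. Indeed, an exact eigenoperator $v\in N^\omega$ satisfies $\varphi^\omega(v^*v)=e^{\pm t}\varphi^\omega(vv^*)$, so for one sign of $t$ the condition $vv^*=1$ would force $\varphi^\omega(v^*v)>1$. What you need, and what the paper uses, is a finite family $F\subset N^\omega$ of partial isometries with $v\varphi^\omega=e^t\varphi^\omega v$ and $\sum_{v\in F}vv^*=1$. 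Then $\sum_{v\in F}vxv^*=\beta^\varphi_{e^t}(x)$ for $x\in\rB(N\subset M,\varphi)$, and $\theta_t(T)=\lim_{n\to\omega}\sum_{v\in F}\lambda(v_n)\zeta(T)\lambda(v_n^*)$ is unital. Property (d) follows from $\sum_{v\in F}v_n\sigma^\varphi_s(v_n)^*\to e^{-\ri st}$ strongly and uniformly on compacts.

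The genuine gap is part (3): you describe the shape but explicitly leave the central step open. Your first candidates do not work. Both $\theta_{-t}\circ\theta_t$ and $\int\theta_t\circ\theta_{-t}$ fix $\rho(M)$, and since $\beta^\varphi_{e^{-t}}\circ\beta^\varphi_{e^t}=\mathrm{id}$ they send $\lambda(a)$ to $\lambda(E_{\rB(N\subset M,\varphi)}(a))$. So no averaging happens and (b), (c) fail. Conjugating by $\rho(v_n)$ alone does not give (c) either. To get $\rho\circ\beta^\varphi_{e^t}\circ E_{\rB(N\subset M,\varphi)}$ on all of $\rho(M)$, you must first project the right side onto $\rB(N\subset M,\varphi)$, i.e.\ you need a right-handed $\zeta$.

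The missing idea is the $J$-symmetrization $\theta'_t(T):=J\theta_t(JTJ)J$. Since $J\lambda(a)J=\rho(a^*)$, $Je_NJ=e_N$ and $J\Delta_\varphi J=\Delta_\varphi^{-1}$, it satisfies
$\theta'_t(e_N)=e_N$,
$\theta'_t\circ\lambda=\lambda$,
$\theta'_t\circ\rho=\rho\circ\beta^\varphi_{e^t}\circ E_{\rB(N\subset M,\varphi)}$, and
$\theta'_t(f(\log\Delta_\varphi))=f(\log\Delta_\varphi+t)$.
Concretely, $\theta'_t$ is the $J$-conjugate of $\zeta$ followed by conjugation by $\rho(v_n^*)$. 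Note that $\rho(v_n)$ itself would shift $\log\Delta_\varphi$ in the same direction as $\lambda(v_n)$.

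Hence $\Theta_t=\theta_t\circ\theta'_t$ fixes $e_N$ and every $f(\log\Delta_\varphi)$, and acts by $\beta^\varphi_{e^t}\circ E_{\rB(N\subset M,\varphi)}$ on both $\lambda(M)$ and $\rho(M)$. Finally, take a limit point of the finite averages $|S_n|^{-1}\sum_{t\in S_n}\Theta_t$ over a F\o lner sequence of a countable dense subgroup $S<\R$. Finite averages avoid measurability problems in $t\mapsto\Theta_t$, since each $\Theta_t$ is an arbitrary limit point; an integral over $t$ is not available. The ergodic-mean step you flag is routine: $\beta^\varphi$ preserves $\varphi$, so these averages converge weakly$^*$ to $E_{\rB(N\subset M,\varphi)^{\beta^\varphi}}$, and the $S$-fixed points coincide with the $\R^*_+$-fixed points by continuity.
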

\begin{proof}
\begin{enumerate}[(1)]
\item There exists a net $(\varepsilon_i)_{i \in I}$ of positive real numbers converging to $0$ and a net $(\mu_i)_{i \in I}$ of finitely supported probability measures on the unitary group $\cU(N)$ with $$\supp(\mu_i) \subset \{ u \in \cU(N) \mid \| u \varphi - \varphi u \| \leq \varepsilon_i \}$$ such that the net $$\Ad(\mu_i)=\int_{\cU(N)} uau^* \: \rd\mu_i(u)$$ converges strongly to $E_{\rB(N \subset M,\varphi)}(a)$ for every $a \in M$. Let $\zeta \in \UCP(\B(\rL^2(M))$ be an accumulation point of the net of ucp maps $$\Ad(\lambda(\mu_i)) = \int_{\cU(N)} \Ad(\lambda(u)) \: \rd\mu_i(u).$$
Properties $(a),(b)$ and $(c)$ are easy to check. It is sufficient to prove $(d)$ for a function $f \in C_0(\R)$ of the form $f=\widehat{h}$ for some $h \in \rL^1(\R)$. Then
$$f(\log \Delta_\varphi)=\int_\R \Delta_\varphi^{\ri t} \: h(t) \rd t$$
hence $$\Ad(\lambda(\mu_i)) (f(\log \Delta_\varphi))= \int_\R \int_{\cU(N)} \lambda(u) \Delta_\varphi^{\ri t} \lambda(u)^*  \: \rd\mu_i(u) h(t)  \rd t  $$
$$= \int_\R  \lambda\left( \int_{\cU(N)}  u\sigma_t^\varphi(u)^* \: \rd\mu_i(u)  \right) \Delta_\varphi^{\ri t} \: h(t)  \rd t.$$
Now, observe that the net of functions
$$ t\mapsto \int_{\cU(N)} u \sigma_t^\varphi(u)^*  \: \rd\mu_i(u)$$
converge to $1$ in the strong topology and uniformly on all compact subsets of $\R$, thanks to \cite[Lemma 2.7]{Co74}. We conclude that $\Ad(\lambda(\mu_i)) (f(\log \Delta_\varphi))$ converges strongly to $f(\log \Delta_\varphi)$, hence $\zeta(f(\log \Delta_\varphi))=f(\log \Delta_\varphi)$.

\item Take $\zeta \in \UCP(\B(\rL^2(M)))$ as in (1). Take $t \in \R$. Take a free ultrafilter $\omega$ on $\N$. We can find a finite set of partial isometries $F \subset N^\omega$ such that $\sum_{v \in F} vv^*=1$ and $v\varphi^\omega=e^t \varphi^\omega v$ for all $v \in F$. Then $\sum_{v \in F} vxv^*=\beta_{e^t}^\varphi(x)$ for all $x \in \rB(N \subset M,\varphi)$ and $\sigma_s^{\varphi^\omega}(v)=e^{\ri st} v$ for all $s \in \R$.

Take $\zeta$ as in the first item. Write $v=(v_n)^\omega$ for all $v \in F$ and define $\theta_t \in \UCP(\B(\rL^2(M))$ by
$$ \theta_t(T) =\lim_{n \to \omega} \sum_{v \in F} \lambda(v_n) \zeta(T) \lambda(v_n^*).$$
Properties (a), (b) and (c) are clear. For property (d), take a function $f \in C_0(\R)$ of the form $f=\widehat{h}$ for some $h \in \rL^1(\R)$. Then
$$ \theta_t(f(\log \Delta_\varphi)) = \lim_{n \to \omega} \sum_{v \in F} \lambda(v_n) f(\log \Delta_\varphi))  \lambda(v_n^*)$$
$$= \lim_{n \to \omega} \sum_{v \in F} \lambda(v_n) \left( \int_\R \Delta_\varphi^{\ri s} h(s) \rd s \right) \lambda(v_n^*).$$
$$= \lim_{n \to \omega} \sum_{v \in F} \int_\R \lambda(v_n)\lambda(\sigma_{s}^\varphi(v_n)^*) \Delta_\varphi^{\ri s} h(s) \rd s.$$
$$= \int_\R e^{-\ri st} \Delta_\varphi^{\ri s} h(s) \rd s=f(\log \Delta_\varphi -t)$$
because the functions $s \mapsto \sum_{v \in F} v_n\sigma_s^\varphi(v_n)^*$ converge strongly to $s \mapsto e^{- \ri st}$ and uniformly on all compact subsets when $n \to \omega$.

\item For every $t \in \R$, take $\theta_t$ as in (2) and define $\theta'_t \in \UCP(\B(\rL^2(M))$ by  $$\theta'_t(T)= J\theta_t(JTJ)J, \quad T \in \B(\rL^2(M)).$$ Then we have
 \begin{enumerate}
 \item  $\theta'_t(e_N)=e_N$  
  \item $\theta'_t \circ \lambda =\lambda$.
  \item $\theta'_t \circ \rho=\rho \circ \beta_{e^t}^\varphi \circ E_{\rB(N \subset M,\varphi)}$.

 \item $\theta'_t(f(\log \Delta_\varphi))=f(\log \Delta_\varphi + t)$ for every $f \in C_0(\R)$, because $J \Delta_\varphi J=\Delta_\varphi^{-1}$.
\end{enumerate} 
Let $\Theta_t = \theta_t \circ \theta'_t \in \UCP(\B(\rL^2(M))$. Then we have
 \begin{enumerate}
 \item  $\Theta_t(e_N)=e_N$  
  \item $\Theta_t \circ \lambda=\lambda \circ \beta_{e^t}^\varphi \circ E_{\rB(N \subset M,\varphi)}$.
\item $\Theta_t \circ \rho=\rho \circ \beta_{e^t}^\varphi \circ E_{\rB(N \subset M,\varphi)}$.
 \item $\Theta_t(f(\log \Delta_\varphi))=f(\log \Delta_\varphi)$ for every $f \in C_0(\R)$.
\end{enumerate}

Let $S < \R$ be a dense countable subgroup and let $(S_n)_{n \in \N}$ be a F\o lner sequence in $S$.

For every $n \in \N$, define 
$$\pi_n = \frac{1}{|S_n|} \sum_{t \in S_n} \Theta_{t}  \in  \UCP(\B(\rL^2(M))$$
and let $\pi \in \UCP(\B(\rL^2(M))$ be an accumulation point of the sequence $(\pi_n)_{n \in \N}$. Then $\pi$ clearly satisfies property (a) and (d). For properties (b) and (c), observe that since $\beta^\varphi$ is state preserving, we have the weak*-convergence 
$$ \frac{1}{|S_n|} \sum_{t \in S_n} \beta^\varphi_{e^t}(x) \rightarrow  E_{\rB(N \subset M,\varphi)^{\beta^\varphi}}(x)$$
for every $x \in  \rB(N \subset M,\varphi)$.

\end{enumerate}
\end{proof}

Finally, we will need the following lemma that uses the approximate eigenstate technique from \cite{Ma18}.
 \begin{lem} \label{centralizing}
Let $\sigma : \R \rightarrow \Aut( \B(H))$ be the one-parameter flow generated by some unbounded self-adjoint operator $X$. Let $A \subset \B(H)$ be a C*-algebra and suppose that for every $t \in \R$, there exists a map $\theta_t \in \UCP(\B(H))$ such that $\theta_t(a)=a$ for all $a \in A$ and $\theta_t(f(X))=f( X -t)$ for every $f \in C_0(\R)$.

Then for every strongly $\sigma$-invariant state $\Phi \in \B(H)^*$, there exists a state $\widetilde{\Phi} \in \B(H)^*$ such that $\widetilde{\Phi}(a)=\Phi(a)$ for all $a \in A$ and $\widetilde{\Phi}(f(X))=f(0)$ for every $f \in C_0(\R)$.
 \end{lem}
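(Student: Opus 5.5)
We will use the maps $\theta_t$ to transport the distribution of $X$ under $\Phi$ back to the point mass at $0$, averaging over all $t$ according to that distribution. By the Riesz theorem there is a positive Radon measure $\mu$ on $\R$ with $\Phi(f(X)) = \int_\R f \,\rd\mu$ for all $f \in C_0(\R)$, and $\mu(\R) \leq 1$. The difficulty is that $\mu$ may lose mass at infinity, since $\Phi$ need not be normal on $C_0(X)$. This is where the strong $\sigma$-invariance and the approximate eigenstate technique of \cite{Ma18} come in: in that case $\mu$ could even be zero. So we do not integrate against $\mu$ directly but localize $X$ instead.

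Fix $\varepsilon>0$ and a compact interval $K=[-R,R]$. Pick $g\in C_0(\R)$ with $0\le g\le 1$ and $g=1$ on a large set. Then $\Phi(g(X-s))$ is the same for all $s$ by $\sigma$-invariance, because $\sigma$ commutes with functions of $X$. This constancy is why invariance alone does not give us mass. Instead, for a partition of unity $(h_k)$ of $\R$ subordinate to intervals $I_k$ of length $\le\delta$ centred at $t_k$, with $\sum_k h_k^2 = 1$, we will define
\[
\Psi(T)=\sum_k \Phi\big(h_k(X)\,\theta_{-t_k}(T)\,h_k(X)\big)
\]
(we use $h_k(X)$ on the outside; we may need to swap the order to $\theta_{-t_k}(h_k(X) T h_k(X))$ and think about which one works). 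The version we intend is
\[
\Psi_\delta(T)=\sum_k \Phi\big(\theta_{t_k}(h_k(X)^{1/2}\cdot)\dots\big),
\]
but more cleanly:
\[
\Psi_\delta(T)=\sum_k \Phi\big(h_k(X)\,\theta_{t_k}'(T)\,h_k(X)\big),
\]
where $\theta'_{t_k}$ is chosen so that $\theta'_{t_k}(f(X)) = f(X+t_k)$. On the spectral window $X\approx t_k$, the operator $f(X+t_k)$ should be close to $f(2t_k)$ rather than $f(0)$, so we must take $\theta_{t_k}$ with shift $X-t_k$, that is, $\theta_{t_k}(f(X))=f(X-t_k)\approx f(0)$ on $\operatorname{supp} h_k$.

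Because the $\theta$'s are ucp, $\Psi_\delta$ is positive. Its value at $1$ is $\Phi(\sum_k h_k(X)^2)=1$ provided we use a partition of the whole line; this is possible in $\B(H)$ via a norm-convergent-free strict sum, and we compute with finite partial sums. On $A$ we have
\[
\Psi_\delta(a)=\sum_k\Phi\big(h_k(X)\,a\,h_k(X)\big).
\]
To show this equals $\Phi(a)$ in the limit, we need $a$ to approximately commute with $h_k(X)$ under $\Phi$. This is the main obstacle. The strong $\sigma$-invariance means $\Phi$ is invariant under conjugation by $e^{\ri sX}$, and averaging gives
\[
\Phi(a)=\Phi\Big(\int e^{\ri sX} a\, e^{-\ri sX}\,\rd\nu(s)\Big)
\]
for any probability measure $\nu$. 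Taking $\nu$ with Fourier transform concentrated near $0$ at scale $\ll\delta$, the "off-diagonal blocks" $h_k(X)\,a\,h_j(X)$ with $|t_k-t_j|\gg$ scale are killed. Adjacent blocks we control by choosing the $h_k$ slowly varying, as in the approximate eigenstate lemma of \cite{Ma18}. Finally, $|\Psi_\delta(f(X))-f(0)|\le\omega_f(\delta)$ for the modulus of continuity, since
\[
h_k(X)\,f(X-t_k)\,h_k(X)=h_k(X)^2 f(X-t_k)
\]
and $|x-t_k|\le\delta$ on the support. Taking a weak$^*$ cluster point $\widetilde\Phi$ of $\Psi_\delta$ as $\delta\to0$ gives both required properties.
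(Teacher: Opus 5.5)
\textbf{There is a genuine gap, and it sits exactly at the difficulty you flagged yourself.}

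As written, $\Psi_\delta(T)=\sum_k\Phi\big(h_k(X)\,\theta_{t_k}(T)\,h_k(X)\big)$ has $\Psi_\delta(1)=\sum_k\Phi(h_k(X)^2)=\int_\R\sum_k h_k^2\,\rd\mu=\mu(\R)$. This is because each $h_k^2$ lies in $C_c(\R)$, so $\Phi(h_k(X)^2)$ only sees the measure $\mu$ representing $\Phi$ on $C_0(X)$. Your claim $\Psi_\delta(1)=\Phi\big(\sum_k h_k(X)^2\big)=1$ pulls a non-normal state through a series that converges only strongly. The finite partial sums give only $\int\sum_{|k|\le n}h_k^2\,\rd\mu\le\mu(\R)$.

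Take an approximate eigenstate with $t_i\to\infty$. It is strongly invariant and has $\mu=0$, so you get $\Psi_\delta=0$. Then neither $\Psi_\delta(a)\to\Phi(a)$ nor $\Psi_\delta(f(X))\to f(0)$ can hold. Separately, your side remark that $\Phi(g(X-s))$ is independent of $s$ is wrong: $\sigma_s$ fixes $g(X)$, it does not translate it.

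The normalization can be repaired by putting the sum inside: set $\Psi_\delta=\Phi\circ\Theta_\delta$ with $\Theta_\delta(T)=\sum_k h_k(X)\theta_{t_k}(T)h_k(X)$, which is ucp since the series converges strongly. Then $\Psi_\delta(f(X))=\Phi(F_\delta(X))$ with $\|F_\delta-f(0)\|_\infty\le\omega_f(\delta)$, and that part is fine.

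But everything then rests on $\Phi\big(\sum_k h_k(X)\,a\,h_k(X)\big)=\Phi(a)$. Your block argument for this is a kernel-level heuristic, whereas a state only responds to operator-norm estimates. One way to supply it:
\begin{enumerate}
\item $\sigma_\nu$ commutes with the pinching, so it suffices that $\|S-\sum_k h_k(X)Sh_k(X)\|=O(\eta/\delta)\|a\|$ for $S=\sigma_\nu(a)$, where $\widehat\nu$ is supported in $[-\eta,\eta]$.
\item This follows from Bernstein's inequality $\|\sigma_s(S)-S\|\le|s|\eta\|S\|$, the spectral localisation of $S$, and almost-orthogonality of the terms $h_k(X)[h_k(X),S]$.
\end{enumerate}
None of this is in the proposal, and it is the real content of the lemma.

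\textbf{How the paper proceeds.} The set of $\Phi$ for which the conclusion holds is weak$^*$-closed and convex. So by \cite[Theorem 3.2]{Ma18} it suffices to treat approximate eigenstates $\Phi=\lim_i\langle\,\cdot\,\xi_i,\xi_i\rangle$ with $\|(X-t_i)\xi_i\|\to 0$. For these, any weak$^*$ cluster point of $\langle\theta_{t_i}(\cdot)\xi_i,\xi_i\rangle$ works, since $\theta_{t_i}$ fixes $A$ and $\|f(X-t_i)\xi_i\|\to 0$ whenever $f(0)=0$. The shift is chosen vector by vector, so mass escaping to infinity is harmless. This is exactly the localisation that your partition of unity, with the sum taken outside $\Phi$, cannot provide.
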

 \begin{proof}
 The set of states $\Phi \in \B(H)^*$ that satisfy the conclusion of the lemma is a weak*-closed convex set.  Therefore, by \cite[Theorem 3.2]{Ma18}, it suffices to prove the lemma when $\Phi$ is an approximate eigenstate. Write $\Phi = \lim_{i \in I} \langle \cdot \xi_i ,\xi_i \rangle$ with $\xi_i \in \dom(X)$ and $\lim_{i \in I} \| (X-t_i) \xi_i \|=0$ for some $t_i \in \R$. Define $\widetilde{\Phi}$ to be a weak*-acuumulation point of $\langle \theta_{t_i}( \cdot ) \xi_i, \xi_i \rangle$. Clearly, we have $\widetilde{\Phi}(a)=\Phi(a)$ for all $a \in A$. Take $f \in C_0(\R)$ such that $f(0)=0$. Then $\lim_i \| f(X-t_i) \xi_i \|=0$. This means that $\widetilde{\Phi}(|f(X)|^2)=0$, hence $\widetilde{\Phi}(f(X))=0$. Since this holds for every $f \in C_0(\R)$ such that $f(0)=0$ and it follows easily that $\widetilde{\Phi}(f(X))=f(0)$ for every $f \in C_0(\R)$.
 \end{proof}

We are now ready to prove the first part of Theorem \ref{main theorem}.

\begin{thm} \label{first part}
Let $N \subset M$ be an inclusion of von Neumann algebras with expectation, where $N$ is of type $\III_1$. Let $\varphi$ be a faithful normal state on $N$. Then the fixed point algebra of the bicentralizer flow $\beta^\varphi : \R^*_+ \curvearrowright \rB(N \subset M,\varphi)$ is equal to $N' \cap M$.
\end{thm}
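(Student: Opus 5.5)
Following the strategy announced after Lemma \ref{binormal state bicentralizer}, I will construct a state $\Phi \in \B(\rL^2(M))^*$ satisfying (a), (b), (c) of that lemma and such that
$$\Phi(\lambda(a)\rho(b))=\langle E_{N' \cap M}(a) \varphi^{1/2} b, \varphi^{1/2} \rangle \quad \text{for all } a,b \in \rB(N \subset M, \varphi)^{\beta^\varphi}.$$
Granting this, Lemma \ref{binormal state bicentralizer} gives $\langle a\varphi^{1/2}b,\varphi^{1/2}\rangle=\langle E_{N'\cap M}(a)\varphi^{1/2}b,\varphi^{1/2}\rangle$ for all $a,b$ in the fixed point algebra $Q:=\rB(N \subset M,\varphi)^{\beta^\varphi}$. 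Since $N'\cap M\subset Q$ and both algebras are $\sigma^\varphi$-invariant, I take $b=(a-E_{N'\cap M}(a))^*\in Q$ and obtain $\varphi(|a-E_{N'\cap M}(a)|^2)$-type vanishing, which forces $a=E_{N'\cap M}(a)$. Hence $Q=N'\cap M$. To make this step rigorous, I would replace $b$ by suitable analytic elements and use that $a\varphi^{1/2}b$ pairs like $\varphi(\sigma_{\ri/2}(\cdot))$; this is routine.

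\textbf{Step 1: a Dixmier-type state.} By Dixmier's property for the inclusion (\cite[Theorem 4.2]{Ma25}), I take nets of convex combinations $\mu_i$ of unitaries of $N$ with $\Ad(\mu_i)(x)\to E_{N'\cap M}(x)$. Using centralizing unitaries, as in the proof of Lemma \ref{ucp maps}(1), with $N$ of type $\III_1$, these should be chosen so that $\|u\varphi-\varphi u\|\to 0$. Define $\Phi_0$ as a weak* limit of
$$T\mapsto \langle \Ad(\lambda(\mu_i))(T)\varphi^{1/2},\varphi^{1/2}\rangle.$$
Then $\Phi_0\circ\rho=\varphi$, $\Phi_0\circ\lambda=\varphi\circ E_{N'\cap M}=\varphi$, and $\Phi_0(e_N)=1$ because $\lambda(u)$ commutes with $e_N$ and $e_N\varphi^{1/2}=\varphi^{1/2}$. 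Moreover
$$\Phi_0(\lambda(a)\rho(b))=\langle E_{N'\cap M}(a)\varphi^{1/2}b,\varphi^{1/2}\rangle \quad \text{for all } a,b\in M.$$
However, $\Phi_0$ is only controlled on $\Delta_\varphi$ in the sense of being $\sigma$-invariant, or at least it can be averaged to be so. It need not satisfy (c).

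\textbf{Step 2: fixing the modular behaviour.} First compose $\Phi_0$ with $\pi$ from Lemma \ref{ucp maps}(3). Restricted to $\lambda(Q)\rho(Q)$ and $e_N$, this does not change the values. Then replace the state by an average over the flow $\Ad(\Delta_\varphi^{\ri s})$ using an invariant mean. Since $\Delta_\varphi^{\ri s}$ commutes with $e_N$ and with $f(\log\Delta_\varphi)$, and implements $\sigma^\varphi$ on $\lambda(M)$ and $\rho(M)$, the relevant values on $\lambda(Q)\rho(Q)$ are preserved: the formula $\langle E_{N'\cap M}(a)\varphi^{1/2}b,\varphi^{1/2}\rangle$ is invariant under $a\mapsto\sigma_s(a)$, $b\mapsto\sigma_s(b)$. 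This makes the state strongly $\sigma$-invariant with $X=\log\Delta_\varphi$.

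Next apply Lemma \ref{centralizing}, with $A$ the C*-algebra generated by $\lambda(Q)$, $\rho(Q)$ and $e_N$, and $\theta_t$ from Lemma \ref{ucp maps}(2), restricted through $E_{\rB}$. On $Q$ we have $\beta_{e^t}^\varphi\circ E_{\rB}=\mathrm{id}$, and on $\rho$ the map acts trivially. The difficulty is that $\theta_t$ fixes $\lambda(a)$ and $\rho(b)$ separately, but we need it to fix products and the generated C*-algebra. I would try to show that $\lambda(Q)$ and $\rho(Q)$ lie in the multiplicative domain of $\theta_t$, which follows from $\theta_t(\lambda(a^*a))=\lambda(a^*a)=\theta_t(\lambda(a))^*\theta_t(\lambda(a))$. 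Then $\theta_t$ is a homomorphism on $A$ and fixes it. The output $\widetilde\Phi$ satisfies (c), and it still satisfies (a) on $Q$. I then precompose with $\pi$, whose image of $\lambda,\rho$ lies in $\lambda(Q),\rho(Q)$ and which preserves $e_N$ and $f(\log\Delta_\varphi)$, to get (a) on all of $M$. I expect this to be the main obstacle: getting (a) for all $a,b\in M$ rather than just on $Q$, while preserving the $E_{N'\cap M}$ formula on $Q$ through all these compositions.
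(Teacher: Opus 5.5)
\textbf{Gap: the invariant-mean average does not give strong invariance.} Your route is the paper's: the Dixmier state $\Phi_0$, a strongly $\sigma$-invariant modification, then Lemma \ref{centralizing}, precomposition with $\pi$, and Lemma \ref{binormal state bicentralizer}. In Lemma \ref{centralizing} you use the $\theta_t$ of Lemma \ref{ucp maps}(2), and your multiplicative-domain argument that $\theta_t$ fixes the C$^*$-algebra generated by $e_N$, $\lambda(Q)$, $\rho(Q)$ is correct. The gap is your claim that averaging over $\Ad(\Delta_\varphi^{\ri s})$ with an invariant mean makes the state \emph{strongly} $\sigma$-invariant.

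An invariant mean only gives $\Phi\circ\sigma_s=\Phi$ for each $s\in\R$. Strong invariance is the stronger requirement $\Phi\circ\sigma_\mu=\Phi$ for every $\mu\in\mathrm{Prob}(\R)$, where $\sigma_\mu(T)=\int_\R\sigma_s(T)\,\rd\mu(s)$. Since $\Phi$ is not normal, it cannot be passed through this integral for a general $T\in\B(\rL^2(M))$, and in general an invariant state on $\B(H)$ need not be strongly invariant. Lemma \ref{centralizing} needs the strong version, because its proof uses \cite[Theorem 3.2]{Ma18} to approximate $\Phi$ by convex combinations of approximate eigenstates.

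\textbf{How the paper fixes this.} Let $K$ be the weak$^*$-closed convex set of states with $\Phi(e_N)=1$ and $\Phi(\lambda(a)\rho(b))=\langle E_{N'\cap M}(a)\varphi^{1/2}b,\varphi^{1/2}\rangle$ for all $a,b\in M$. Your $\Phi_0$ shows $K\neq\emptyset$. Every element of $K$ is binormal. Binormality and the Cauchy--Schwarz bound $|\Phi(\int\lambda(x_s)\rho(y_s)\,\rd\mu(s))|^2\le\varphi(\int x_sx_s^*\,\rd\mu(s))\,\varphi(\int y_sy_s^*\,\rd\mu(s))$ let you approximate $s\mapsto\sigma^\varphi_s(a)$ and $s\mapsto\sigma^\varphi_s(b)$ by step functions and pull $\Phi$ through the integral. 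Hence $K$ is invariant under $\Phi\mapsto\Phi\circ\sigma_\mu$; this is \cite[Lemma 4.1]{Ma18}. These maps form a commuting family of weak$^*$-continuous affine maps of $K$, so Markov--Kakutani (\cite[Proposition 3.10]{Ma18}) gives a strongly invariant $\Phi_1\in K$, to which Lemma \ref{centralizing} applies. With this, your preliminary composition with $\pi$ becomes unnecessary.

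\textbf{A false remark in Step 1.} Drop the claim that the Dixmier averages can be taken over unitaries with $\|u\varphi-\varphi u\|\to0$. If that were possible, $\Phi_0$ would already satisfy (c) by the argument of Lemma \ref{ucp maps}(1). Lemma \ref{binormal state bicentralizer} would then give $\rB(N\subset M,\varphi)=N'\cap M$. That fails whenever $T(N\subset M)\neq\{0\}$, by Theorem \ref{second part}; for example, the canonical unitary of $N\rtimes_{\sigma_t^\varphi}\Z$ lies in the bicentralizer but not in $N'\cap M$. You never use this claim, so it is harmless apart from being false.

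\textbf{The last step is fine.} The final step you flag as the main obstacle is not one. For $a,b\in Q$, $\lambda(a)$ and $\rho(b)$ are in the multiplicative domain of $\pi$, so the $E_{N'\cap M}$-formula on $Q$ survives. Meanwhile $\pi\circ\lambda=\lambda\circ E_Q$ and $\pi\circ\rho=\rho\circ E_Q$ give (a) on all of $M$.
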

\begin{proof}
Consider the set $K$ of all states $\Phi \in \B(\rL^2(M))^*$ such that :
\begin{itemize}
\item $\Phi(e_N)=1$
\item $\Phi(\lambda(a) \rho(b) )=\langle E_{N' \cap M}(a) \varphi^{1/2} b, \varphi^{1/2} \rangle$ for all $a,b \in M$
\end{itemize}

Clearly, $K$ is a weak*-closed convex set. We claim that $K \neq \emptyset$. Indeed, by \cite{Ma25}, we know that $N \subset M$ has the weak Dixmier property and more precisely that $E_{N' \cap M}$ lies in the closed convex hull of $\{ \Ad(u) \mid u \in \cU(N) \} \subset \UCP(M)$. Hence there exists a ucp map $\chi \in \UCP( \B(\rL^2(M))$ in the closed convex hull of $\{ \Ad(\lambda(u)) \mid u \in \cU(N) \} \subset \UCP(M)$ such that 
\begin{itemize}
\item $\chi(e_N)=e_N$
\item $\chi \circ \lambda = \lambda \circ E_{N' \cap M}$
\item $\chi \circ \rho = \rho$
\end{itemize} 
 Define a state $\Phi_0 \in \B(\rL^2(M))^*$ by $\Phi_0(T)=\langle \chi(T) \varphi^{1/2},\varphi^{1/2} \rangle$ for all $T \in \B(\rL^2(M))$. Then $\Phi_0 \in K$.

Now, consider the one-parameter automorphism group $$\sigma : \R \ni t \mapsto \Ad(\Delta_\varphi^{\ri t}) \in \Aut(\B(\rL^2(M)).$$
Clearly, $K$ is globally $\sigma$-invariant in the sense that $\Phi \circ \sigma_t \in K$ for every $\Phi \in K$ and $t \in \R$. The same proof as in \cite[Lemma 4.1]{Ma18} shows that $K$ is actually strongly $\sigma$-invariant : we have $\Phi \circ \sigma_\mu \in K$ for every $\Phi \in K$ and every probability measure $\mu \in \mathrm{Prob}(\R)$. From \cite[Proposition 3.10]{Ma18}, we conclude that $K$ contains some strongly $\sigma$-invariant state $\Phi_1$. 

Then thanks to Lemma \ref{centralizing}, we can find $\Phi_2 \in \B(\rL^2(M))^*$ such that 
 \begin{itemize}
 \item $\Phi_2(e_N)=\Phi_1(e_N)=1$.
 \item $\Phi_2(f(\log\Delta_\varphi))=f(0)$ for every $f \in C_0(\R)$.
  \item $\Phi_2(\lambda(a)\rho(b))=\Phi_1(\lambda(a)\rho(b))=\langle E_{N' \cap M}(a) \varphi^{1/2} b, \varphi^{1/2} \rangle$ for all $a,b \in \rB(N \subset M, \varphi)^{\beta^\varphi}$.
\end{itemize}
Now take $\pi$ as in Lemma \ref{ucp maps} and let $\Phi_3=\Phi_2 \circ \pi$, then we have
 \begin{itemize}
 \item $\Phi_3(e_N)=\Phi_2(e_N)=1$.
 \item $\Phi_3 \circ \lambda = \Phi_2 \circ \lambda \circ E_{\rB(N \subset M,\varphi)^{\beta^\varphi}} = \varphi$.
 \item $\Phi_3 \circ \rho = \Phi_2 \circ \rho \circ E_{\rB(N \subset M,\varphi)^{\beta^\varphi}} = \varphi$.
 \item $\Phi_3(f(\log\Delta_\varphi))=\Phi_2(f(\log\Delta_\varphi))=f(0)$ for every $f \in C_0(\R)$.
  \item $\Phi_3(\lambda(a)\rho(b))=\Phi_2(\lambda(a)\rho(b))=\langle E_{N' \cap M}(a) \varphi^{1/2} b, \varphi^{1/2} \rangle$ for all $a,b \in \rB(N \subset M, \varphi)^{\beta^\varphi}$.
\end{itemize}
The last item follows from the fact that $\lambda(a)$ and $\rho(b)$ are  in the multiplicative domain of $\pi$ for all $a,b \in \rB(N \subset M, \varphi)^{\beta^\varphi}$.

Now, we can apply Lemma \ref{binormal state bicentralizer} to deduce that $\Phi_3(\lambda(a) \rho(b))= \langle a \varphi^{1/2} b, \varphi^{1/2} \rangle$ for all $a,b \in \rB(N \subset M,\varphi)$. Therefore, by the last item above, we conclude that  $$\langle a \varphi^{1/2} b, \varphi^{1/2} \rangle=\langle E_{N' \cap M}(a) \varphi^{1/2} b, \varphi^{1/2} \rangle$$ for all $a,b \in \rB(N \subset M,\varphi)^{\beta^\varphi}$, and this means that $\rB(N \subset M,\varphi)^{\beta^\varphi}=N' \cap M$.
\end{proof}

Theorem \ref{letter thm amenable general} follows from Theorem \ref{first part} and \cite[Theorem C]{AHHM18} in the case where $N$ is a factor of type $\III_1$. For the general case, we then proceed as in \cite[Theorem 10.20]{Ma25}. Theorem \ref{letter kadison}.(2) is deduced from Theorem \ref{letter thm amenable general} and then Theorem \ref{letter kadison}.(1) applied to $P$.

\section{Eigenvectors of the relative bicentralizer flow}
The second part of Theorem \ref{main theorem} is actually a consequence of the first part applied to a crossed product inclusion.

\begin{thm} \label{second part}
Let $N \subset M$ be an inclusion of von Neumann algebras with expectation, where $N$ is of type $\III_1$. Let $\varphi$ be a faithful normal state on $N$. Then for every $t \in \R$ and every $x \in M$, the following are equivalent :
\begin{enumerate}
\item $x \in \rB(N \subset M,\varphi)$ and $\beta_\lambda^\varphi (x)=\lambda^{\ri t} x$ for all $ \lambda \in \R^*_+$.
\item $x \in \rb(N \subset M,\varphi)$ and $\beta_\lambda^\varphi (x)=\lambda^{\ri t} x$ for all $ \lambda \in \R^*_+$.
\item $xy=\sigma_t^\varphi(y)x$ for all  $y \in N$.
\end{enumerate}
\end{thm}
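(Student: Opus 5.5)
The plan is to prove the implications $(1)\Rightarrow(3)$ and $(3)\Rightarrow(2)$, using $(2)\Rightarrow(1)$ as given. That last implication should be immediate from the standard fact that $\rb(N \subset M,\varphi) \subset \rB(N \subset M,\varphi)$, the larger algebra being the weak* closure in the definition of \cite{Ma25}. As suggested at the start of this section, the hard direction $(1)\Rightarrow(3)$ will be reduced to Theorem \ref{first part} applied to a crossed product inclusion. This is the usual way of turning eigenvectors into fixed points.

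For $(1)\Rightarrow(3)$, I would twist by a $\III_1$-compatible tensor factor. Take an auxiliary type $\III_1$ factor $Q$ with a faithful normal state $\psi$, for instance an Araki--Woods factor. Choose $\psi$ so that there exists a nonzero $w \in Q$ with $\sigma^\psi_s(w)=e^{-\ri st}w$; equivalently $w\psi = e^{-t}\psi w$ up to normalization, so $w$ is an eigenoperator of the modular group. Such $w$ exist when $\psi$ is, e.g., a Powers-type state on an ITPFI factor whose eigenvalue list contains $e^{-t}$.

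Now consider the inclusion $N\ovt Q \subset M\ovt Q$ with the state $\varphi\otimes\psi$. By the tensor-product behaviour of relative bicentralizers \cite[Proposition 7.8]{Ma25}, $x\otimes 1$ lies in $\rB(N\ovt Q\subset M\ovt Q,\varphi\otimes\psi)$. Moreover $\beta^{\varphi\otimes\psi}_\lambda(x\otimes 1)=\lambda^{\ri t}x\otimes 1$. One should then check, by expressing $\beta_\lambda$ through partial isometries $v$ with $v\varphi^\omega=\lambda\varphi^\omega v$ as in Lemma \ref{ucp maps}(2), that $w$ is an eigenvector of the bicentralizer flow of $Q$ with eigenvalue $\lambda^{-\ri t}$; in fact $w\in \rB(Q,\psi)$ requires care. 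A cleaner choice is to take $Q=\rB(\rL^2(\R))$-type or the crossed product $N\rtimes_{\sigma^\varphi}\R$-like construction. I would therefore instead use the inclusion $N \subset M \rtimes_{\sigma^\varphi|?}\dots$.

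The most robust route seems to be the following. Let $\tilde M = M\ovt \rB(\ell^2)$, or better the crossed product $\tilde M = M\rtimes_{\sigma^{\varphi}}\R$ relative to $\tilde N = N \rtimes \R$. Then $\lambda(s)$ implements $\sigma_s^\varphi$, and an element of the form $x\lambda(t)^*$-type twisting converts condition (3) into commutation with $\tilde N$. I would choose whichever construction makes $y:=x\otimes w^*$ (or $x\,u_t^*$) satisfy $\beta_\lambda(y)=y$ in the enlarged relative bicentralizer. Theorem \ref{first part} then gives $y\in \tilde N'\cap\tilde M$. Unwinding this with $y$ ranging over $N\otimes 1$ yields $x a\otimes w^* = \ldots$, i.e. $x a = \sigma^\varphi_t(a) x$ after slicing by a state on $Q$ with $\psi(ww^*)\neq 0$.

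For $(3)\Rightarrow(2)$, suppose $xy=\sigma_t^\varphi(y)x$ for all $y\in N$. For $u\in\cU(N)$ with $\|u\varphi-\varphi u\|$ small, Connes' lemma \cite[Lemma 2.7]{Co74} gives $\sigma_t^\varphi(u)\approx u$ strongly. Hence $[x,u]\approx 0$, which places $x$ in the asymptotic centralizer, i.e. in $\rb(N\subset M,\varphi)$. Then, using $v\varphi^\omega=\lambda\varphi^\omega v$, we get $\sigma_t^{\varphi^\omega}(v)=\lambda^{\ri t}v$, and therefore $\sum vxv^*=\sum \sigma_t(v)xv^*$, which equals $\lambda^{\ri t}x$.

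I expect the main obstacle to be choosing the auxiliary inclusion so that it has expectation, keeps the $\III_1$ base, and makes $x\otimes w^*$ genuinely a fixed point of the flow. This includes verifying that the eigenoperator $w$ of $Q$ lies in its bicentralizer flow's $\lambda^{-\ri t}$-eigenspace. If that verification fails, I would instead let $Q$ be of type $\III_{\mu}$ together with $\R$-tensor tricks.
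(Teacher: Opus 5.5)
\textbf{The direction $(1)\Rightarrow(3)$ has a genuine gap.} You never commit to an auxiliary inclusion, and each concrete candidate you name fails.
\begin{itemize}
\item In $N\ovt Q\subset M\ovt Q$, the element $1\otimes w$ lies in the base algebra. For it to belong to the relative bicentralizer, $w$ must commute with the asymptotic centralizer of $(Q,\psi)$, i.e.\ lie in the bicentralizer of $(Q,\psi)$. That bicentralizer is trivial for an Araki--Woods factor by Haagerup's theorem. An eigenoperator of $\sigma^\psi$ is not an eigenvector of the bicentralizer flow: the partial isometries $v$ with $v\varphi^\omega=\lambda\varphi^\omega v$ \emph{implement} $\beta^\varphi_\lambda$, they are not eigenvectors of it.
\item Passing to $N\rtimes_{\sigma^\varphi}\R\subset M\rtimes_{\sigma^\varphi}\R$ makes the base semifinite, so Theorem \ref{first part} no longer applies.
\item Keeping $N$ inside $M\rtimes_{\sigma^\varphi}\R$ kills the conditional expectation onto $N$.
\item $M\ovt\B(\ell^2)$ adds no new element implementing $\sigma^\varphi_t$ on $N$.
\end{itemize}

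The missing idea is to keep the type $\III_1$ base $N$ and take the crossed product by the single automorphism $\sigma^\varphi_t$. Let $\widetilde M=M\rtimes_{\sigma^\varphi_t}\Z$; it is still with expectation over $N$ because $\Z$ is discrete. Its canonical unitary $u$ satisfies $uy=\sigma^\varphi_t(y)u$ for $y\in N$, which is condition (3) inside $\widetilde M$. The easy implications $(3)\Rightarrow(2)\Rightarrow(1)$ for $N\subset\widetilde M$ then make $u$ a $\lambda^{\ri t}$-eigenvector of $\beta^\varphi$ on $\rB(N\subset\widetilde M,\varphi)$. This algebra also contains $x$ with the same eigenvalue, since the bicentralizer and its flow only involve the asymptotic centralizer of $N$. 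Hence $u^*x$ is $\beta^\varphi$-fixed, and Theorem \ref{first part} gives $u^*x\in N'\cap\widetilde M$. Therefore $xy=u(u^*x)y=uyu^*x=\sigma^\varphi_t(y)x$. Your tentative $N\subset M\rtimes_{\sigma^\varphi}\cdots$ with $xu_t^*$ points in this direction. What is missing is exactly the restriction to the discrete group generated by $\sigma^\varphi_t$ and the eigenvector property of $u$.

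\textbf{Your $(3)\Rightarrow(2)$ also proves less than it claims.} The estimate $xu=\sigma^\varphi_t(u)x\approx ux$ for asymptotically centralizing $u$ shows that $x$ commutes with the asymptotic centralizer of $N$. That gives $x\in\rB(N\subset M,\varphi)$, not $x\in\rb(N\subset M,\varphi)$. Identifying the two is the relative bicentralizer conjecture, which is open in general. On eigenvectors of $\beta^\varphi$ it is precisely the implication $(1)\Rightarrow(2)$ you are proving. (In the same vein, $(2)\Rightarrow(1)$ is not a matter of $\rB$ being a weak$^*$ closure of $\rb$; it is \cite[Proposition 8.4]{Ma25}.) So your argument yields only $(3)\Rightarrow(1)$ and leaves (2) outside the cycle.

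The paper instead works in the core. Since $\varphi^{-\ri t}x$ commutes with $N$ in $c(M)$, we get $x\in\varphi^{\ri t}\cdot(N'\cap c(M))\subset\rb(N\subset M,\varphi)$ by the very definition of $\rb$. Then $\beta^\varphi_\lambda(x)=\lambda^{\ri t}x$ follows from $\beta^\varphi_\lambda(\varphi^{\ri t})=\lambda^{\ri t}\varphi^{\ri t}$ and the fact that $\beta^\varphi$ fixes $N'\cap c(M)$. This also avoids your eigenvalue step $\sum vxv^*=\sum\sigma_t(v)xv^*$, which does not follow from (3). Condition (3) gives $vx=x\,\sigma^{\varphi^\omega}_{-t}(v)$, not $vx=\sigma^{\varphi^\omega}_t(v)x$.
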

\begin{proof}

 (1) $\Rightarrow$ (3). Suppose that there exists $t \in \R$ and $x \in \rB(N \subset M,\varphi)$ such that $\beta_\lambda^\varphi(x)=\lambda^{\ri t} x$ for every $\lambda \in \R^*_+$. Let $\widetilde{M}=M \rtimes_{\sigma_t^\varphi} \Z$ be the crossed product which is generated by a copy of $M$ and a unitary $u$ such that $uau^*=\sigma_t^\varphi(a)$ for all $a \in M$. Observe that $N \subset \widetilde{M}$ is with expectation. Moreover, we have $u \in \rB(N \subset \widetilde{M},\varphi)$ and  $\beta_\lambda^\varphi(u)=\lambda^{\ri t} u$ for all $\lambda \in \R^*_+$. This means that $u^*x \in \rB(N \subset \widetilde{M},\varphi)^{\beta^\varphi}$. By Theorem \ref{first part}, we conclude that $u^*x \in N' \cap \widetilde{M}$ which means that $$xy=uu^*xy=uyu^*x=\sigma_t^\varphi(y)x$$
 for all $y \in N$. 
 
 (3) $\Rightarrow$ (2). Take $x \in M$ and $t \in \R$ such that $xy=\sigma_t^\varphi(y)x$ for all $y \in N$. Then $\varphi^{-\ri t}x \in N' \cap c(M)$. Thus $x \in \varphi^{\ri t} \cdot (N' \cap c(M))$, hence $x \in \rb(N \subset M,\varphi)$ by definition of $\rb(N \subset M,\varphi)$. Moreover, since $\varphi^{-\ri t}x \in N' \cap c(M)$, we have $\beta_\lambda^\varphi(\varphi^{-\ri t}x)=\varphi^{-\ri t}x$ for every $\lambda \in \R^*_+$. Since $\beta_\lambda^\varphi(\varphi^{\ri t})=\lambda^{\ri t} \varphi^{\ri t}$, we conclude that $\beta_\lambda^\varphi(x)=\lambda^{\ri t} x$.
 
 (2) $\Rightarrow$ (1). This follows from \cite[Proposition 8.4]{Ma25}.
\end{proof}

\begin{df}
Let $N \subset M$ be an inclusion of von Neumann algebras. Define its relative $T$-invariant by
$$ T(N \subset M):=\{ t \in \R \mid \text{there exists a nonzero } x \in M \text{ such that } xy=\sigma_t^\varphi(y)x \text{ for all } y \in N\}$$
where $\varphi$ is any faithful normal semifinite weight on $N$.
\end{df}
\begin{rem}
The invariant $T(N \subset M)$ is independent of the choice of $\varphi$ because of Connes's Radon-Nikodym cocycle theorem : if $\psi$ is another faithful normal semifinite weight on $N$, then $\sigma_t^\psi = \Ad(u) \circ \sigma_t^\varphi$ for some unitary $u \in \cU(N)$.

If $N$ is an irreducible subfactor of $M$, then $T(N \subset M)$ is a subgroup of $\R$.
\end{rem}

\begin{prop}
Let $N \subset M$ be an inclusion of von Neumann algebras. Suppose that $N$ is a type $\III_1$ factor, that $N \subset M$ is with expectation and that $M$ has separable predual. Then $T(N \subset M)$ is at most countable.
\end{prop}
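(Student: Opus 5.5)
The plan is to use the eigenvector characterization in Theorem \ref{second part} together with the fact that $\beta^\varphi$ preserves $\varphi$. This turns elements of $T(N \subset M)$ into mutually orthogonal nonzero vectors in the separable Hilbert space $\rL^2(M)$.

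First fix the state. Fix a faithful normal state $\varphi$ on $N$ and extend it to $M$ by $\varphi \circ E_N$, where $E_N$ is a faithful normal conditional expectation. This extension is a faithful normal state on $M$, and $\sigma_t^\varphi$ on $N$ is the modular group of $\varphi|_N$. Since the remark after the definition shows that $T(N \subset M)$ does not depend on the weight, we may compute it with this $\varphi$. For each $t \in T(N \subset M)$, choose a nonzero $x_t \in M$ with $x_t y = \sigma_t^\varphi(y) x_t$ for all $y \in N$. By the implication (3) $\Rightarrow$ (1) of Theorem \ref{second part}, which applies because $N$ is of type $\III_1$ and the inclusion is with expectation, we get $x_t \in \rB(N \subset M,\varphi)$ and $\beta^\varphi_\lambda(x_t) = \lambda^{\ri t} x_t$ for all $\lambda \in \R^*_+$.

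Next comes orthogonality. The flow $\beta^\varphi$ is $\varphi$-preserving on $\rB(N \subset M,\varphi)$; this is the fact already used in the proof of Lemma \ref{ucp maps}(3). Hence for $s,t \in T(N \subset M)$ and every $\lambda \in \R^*_+$,
$$\varphi(x_t^* x_s) = \varphi\big(\beta^\varphi_\lambda(x_t^* x_s)\big) = \lambda^{\ri (s-t)} \varphi(x_t^* x_s).$$
If $s \neq t$, we can choose $\lambda$ with $\lambda^{\ri(s-t)} \neq 1$, so $\varphi(x_t^*x_s)=0$. Thus the vectors $x_t \varphi^{1/2} \in \rL^2(M)$ are pairwise orthogonal, since $\langle x_s\varphi^{1/2}, x_t\varphi^{1/2}\rangle = \varphi(x_t^*x_s)$. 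They are also nonzero, because $\|x_t\varphi^{1/2}\|^2=\varphi(x_t^*x_t)>0$ by faithfulness of $\varphi$ on $M$.

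Finally, since $M$ has separable predual, $\rL^2(M)$ is separable. A separable Hilbert space contains only countably many pairwise orthogonal nonzero vectors, so $T(N \subset M)$ is at most countable.

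The only delicate point is the invariance $\varphi \circ \beta^\varphi_\lambda = \varphi$ on the relative bicentralizer, together with the requirement that the state used there is the same $\varphi \circ E_N$ that is faithful on $M$. I expect both to follow directly from the construction of $\beta^\varphi$ in \cite{AHHM18, Ma25}, since $\beta^\varphi_\lambda$ is implemented by partial isometries $v$ in $N^\omega$ with $v\varphi^\omega=\lambda\varphi^\omega v$ and $\sum vv^*=1$. Everything else is immediate from Theorem \ref{second part}.
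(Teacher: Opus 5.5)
Your argument is correct, but it takes a different route from the paper's.

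\textbf{The paper's route.} The paper never passes through the bicentralizer. Given nonzero $x_1,x_2$ intertwining $\sigma^\varphi_{t_1}$ and $\sigma^\varphi_{t_2}$ with $t_1\neq t_2$, it observes that $x_1^*x_2$ intertwines $\sigma^\varphi_{t_2-t_1}$. Applying the conditional expectation gives $a=E(x_1^*x_2)\in N$ with $ay=\sigma^\varphi_{t_2-t_1}(y)a$ for all $y\in N$. Since $N$ is a factor, $a^*a$ and $aa^*$ are scalars, so a nonzero $a$ would make $\sigma^\varphi_{t_2-t_1}$ inner. This is impossible, because nontrivial modular automorphisms of a type $\III_1$ factor are outer (Connes). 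Hence the family is $E$-orthogonal, and separability of $M_*$ finishes the proof.

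\textbf{Your route.} You use the implication (3)$\Rightarrow$(1) of Theorem \ref{second part} to turn each $x_t$ into a $\beta^\varphi$-eigenvector with character $\lambda\mapsto\lambda^{\ri t}$. You then use the $\varphi$-invariance of $\beta^\varphi$ to get $\varphi(x_t^*x_s)=0$ for $s\neq t$. That invariance is exactly the fact the paper itself invokes in the proof of Lemma \ref{ucp maps}(3), so your ``delicate point'' is covered. You obtain only $\varphi$-orthogonality rather than $E$-orthogonality, but that suffices. There is no circularity: the direction you use does not depend on the new ergodicity result (Theorem \ref{first part}). It rests only on the core computation $\varphi^{-\ri t}x\in N'\cap c(M)$ and the inclusion $\rb(N\subset M,\varphi)\subset\rB(N\subset M,\varphi)$, both quoted in the proof of Theorem \ref{second part}.

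\textbf{Comparison.} The paper's proof is elementary and self-contained, using only factoriality of $N$ and Connes' outerness of $\sigma^\varphi_s$ for $s\neq 0$. Yours imports the relative bicentralizer machinery. In exchange, it shows conceptually that $T(N\subset M)$ lies in the set of eigencharacters of the $\varphi$-preserving flow $\beta^\varphi$ on $\rB(N\subset M,\varphi)\subset M$; countability then follows from the standard fact that distinct eigencharacters of a state-preserving action give orthogonal vectors in the separable space $\rL^2(M)$. Your argument also does not invoke the factoriality of $N$ explicitly.
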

\begin{proof}
Take $E : M \rightarrow N$ a faithful normal condition expectation. Take $t_1 \neq t_2$ in $\R$ and two nonzero $x_i \in M, i=1,2$ such that $x_iy=\sigma_{t_i}^\varphi(y)x_i$ for all $y \in N$. Then $x_1^*x_2y=\sigma_{t_2-t_1}^\varphi(y)x_1^*x_2$ for all $y \in N$. Applying $E$ to this equation, we obtain $$E(x_1^*x_2)y=\sigma_{t_2-t_1}^\varphi(y)E(x_1^*x_2)$$ for all $y \in N$. But since $N$ is of type $\III_1$, the modular automorphism $\sigma_{t_2-t_1}^\varphi$ is outer. This means that $E(x_1^*x_2)=0$. Since $M$ has separable predual, an $E$-orthogonal family in $M$ is at most countable. We conclude that $T(N \subset M)$ is at most countable.
\end{proof}

We now have the following corollary of Theorem \ref{second part}.

\begin{cor} \label{criterion fixed point}
Let $N \subset M$ be an inclusion of von Neumann algebras with expectation, where $N$ is of type $\III_1$. Let $\varphi$ be a faithful normal state on $N$. Then for every $\lambda  \in \R^*_+ \setminus \{1\}$ we have
$$ \rB(N \subset M,\varphi)^{\beta^\varphi_\lambda}=\rb(N \subset M,\varphi)^{\beta^\varphi_\lambda}$$
and this algebra is equal to $N' \cap M$ is and only if there is no nonzero $t \in T(N \subset M)$ such that $\lambda^{\ri t}=1$. 
\end{cor}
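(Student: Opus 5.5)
We fix $\lambda \in \R^*_+ \setminus \{1\}$ and decompose an element $x \in \rB(N \subset M,\varphi)^{\beta^\varphi_\lambda}$ into spectral components for the flow $\beta^\varphi$. The $\beta^\varphi$-action of $\R^*_+$ restricted to the fixed points of $\beta^\varphi_\lambda$ factors through the compact group $\R^*_+/\lambda^{\Z}$. Its dual is the discrete group $\{t \in \R \mid \lambda^{\ri t}=1\}=\frac{2\pi}{\log \lambda}\Z$.

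For each $t$ in this dual group we define the Fourier coefficient
$$x_t=\frac{1}{\log\lambda}\int_0^{\log \lambda} e^{-\ri s t}\,\beta^\varphi_{e^s}(x)\,\rd s ,$$
with the obvious sign adjustment if $\lambda<1$. By construction $x_t \in \rB(N \subset M,\varphi)$ and $\beta^\varphi_\mu(x_t)=\mu^{\ri t}x_t$ for all $\mu \in \R^*_+$, using $\beta^\varphi_\lambda(x)=x$ to see that the integrand is $\log\lambda$-periodic.

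By Theorem \ref{second part}, (1)$\Rightarrow$(2), each $x_t$ lies in $\rb(N \subset M,\varphi)$. Since $\beta^\varphi$ preserves $\varphi$ (as used in Lemma \ref{ucp maps}), we can work in $\rL^2$. There $x\varphi^{1/2}$ is the $\rL^2$-sum of the orthogonal components $x_t\varphi^{1/2}$, with Fejér (Cesàro) means of the Fourier series converging to $x$ in the strong/$\sigma$-weak topology. The Fejér means are bounded by $\|x\|$ since they are given by averaging $\beta^\varphi$ against a positive kernel. Each Fejér mean lies in $\rb(N\subset M,\varphi)^{\beta^\varphi_\lambda}$, and $\rb$ is a von Neumann algebra, so $x \in \rb(N \subset M,\varphi)^{\beta^\varphi_\lambda}$. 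The reverse inclusion $\rb \subset \rB$ is known from \cite[Proposition 8.4]{Ma25}, which gives the first equality.

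For the second claim, suppose first that there is a nonzero $t\in T(N\subset M)$ with $\lambda^{\ri t}=1$. Pick a nonzero $x\in M$ with $xy=\sigma_t^\varphi(y)x$ for all $y \in N$. By Theorem \ref{second part}, (3)$\Rightarrow$(1), $x$ is a $\beta^\varphi_\lambda$-fixed element of $\rB(N\subset M,\varphi)$. If $x$ were in $N'\cap M$, then $\sigma_t^\varphi(y)x=yx$ for all $y$. Applying the expectation to $x^*\cdot$, as in the countability proposition, gives $(\sigma_t^\varphi(y)-y)E(x^*x)=0$ for all $y\in N$. Hence $\sigma_t^\varphi$ would be inner on a corner of $N$, contradicting the type $\III_1$ assumption that $\sigma_t^\varphi$ is outer (on every corner) for $t\neq0$. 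So the algebra strictly contains $N'\cap M$.

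Conversely, assume there is no such nonzero $t$. For $x$ fixed by $\beta^\varphi_\lambda$, each Fourier coefficient $x_t$ with $t\neq 0$ satisfies condition (3) of Theorem \ref{second part}. Therefore $x_t=0$ by assumption. Then $x=x_0$ is $\beta^\varphi$-fixed, so $x\in N'\cap M$ by Theorem \ref{first part}.

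The main obstacle is justifying the Fourier reconstruction of $x$ from its coefficients inside the von Neumann algebra. I expect this to be handled by the Fejér-mean argument above, using that $\beta^\varphi$ is a $\varphi$-preserving continuous action on $\rB(N\subset M,\varphi)$.
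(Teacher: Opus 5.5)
Your proposal is correct and follows the paper's argument. The paper also uses that $\beta^\varphi$ is periodic on the $\beta^\varphi_\lambda$-fixed algebra, so this algebra is generated by $\beta^\varphi$-eigenvectors whose exponents satisfy $\lambda^{\ri t}=1$, and then applies Theorem \ref{second part} (together with Theorem \ref{first part} for $t=0$). Your Fejér-mean argument makes explicit the ``generated by eigenvectors'' step that the paper only asserts.

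For strictness, your outerness argument works. A shorter route is to note that $N'\cap M$ is pointwise $\beta^\varphi$-fixed (by Theorem \ref{first part}), so a nonzero eigenvector with exponent $t\neq 0$ cannot lie in it.
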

\begin{proof}
The restriction of $\beta_\lambda^\varphi$ to $\rB(N \subset M,\varphi)^{\beta^\varphi_\lambda}$ is periodic, hence $\rB(N \subset M,\varphi)^{\beta^\varphi_\lambda}$ and $\rb(N \subset M,\varphi)^{\beta^\varphi_\lambda}$ are generated by eigenvectors of $\beta^\varphi$. Moreover, if $x \in \rB(N \subset M,\varphi)^{\beta_\lambda^\varphi}$ is a nonzero eigenvector of $\beta^\varphi$, then there exists $t \in \R$ such that $\beta_\mu^\varphi(x)=\mu^{\ri t} x$ for all $\mu \in \R^*_+$. Since $x$ is fixed by $\beta_\lambda^\varphi$, we must have $\lambda^{\ri t}=1$.

The conclusion follows from Theorem \ref{second part}.
\end{proof}

Corollary \ref{letter cor amenable III1} follows from \cite[Theorem C]{AHHM18} and Corollary \ref{criterion fixed point}.

 \section{Eigenvectors of the modular flow in the relative bicentralizer}
 
 \begin{thm} \label{eigenvectors modular flow}
Let $N \subset M$ be an inclusion of von Neumann algebras with a faithful normal conditional expectation $E_N : M \rightarrow N$. Take a faithful state $\varphi \in M_*$ such that $\varphi=\varphi \circ E_N$.

Then every eigenvector of $\sigma^\varphi|_{\rB(N \subset M,\varphi)}$ is contained in $\rb(N \subset M,\varphi)$. Equivalently, we have $$\rB(N \subset M,\varphi)^{\sigma_t^\varphi}=\rb(N \subset M,\varphi)^{\sigma_t^\varphi}$$
for every $t \in \R \setminus \{0\}$.
\end{thm}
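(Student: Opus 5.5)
The idea is to mimic the proof of Theorem \ref{first part}, with the roles of the bicentralizer flow and the modular flow exchanged, and then reduce the eigenvector statement to a fixed-point statement via a crossed product, as in Theorem \ref{second part}.

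First I reduce to fixed points. Let $x \in \rB(N \subset M,\varphi)$ satisfy $\sigma_s^\varphi(x)=e^{\ri s t}x$ for all $s$. We want to show $x \in \rb(N \subset M,\varphi)$. Recall that $\rb(N \subset M,\varphi)$ is generated by $\varphi^{\ri s}\cdot(N' \cap c(M))$. Consider $\widetilde{M}=M \rtimes_{\sigma^\varphi_{s_0}} \Z$ and its unitary $u$, or better the continuous core. In $c(M)$, the element $\varphi^{-\ri t}x$ is centralized by $\varphi$ in the appropriate sense. So I will aim to prove the following statement:
\[
\rB(N \subset M,\varphi)\cap M_\varphi \subset \rb(N \subset M,\varphi).
\]
This is the $t=0$ case, and it should transfer to general $t$ by tensoring with a type $\III_1$ factor (or $\rB(\ell^2)$) carrying a state with an eigenoperator of eigenvalue $e^{t}$. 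Concretely, I would take $M \ovt R_\infty$ with $N \ovt R_\infty$ and an element $w \in R_\infty$ with $\sigma^\psi_s(w)=e^{-\ri st}w$. Then $xw$ is $\varphi\otimes\psi$-centralized, and one uses the stability of $\rB$ and $\rb$ under tensoring with the AFD $\III_1$ factor from \cite{Ma25}.

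For the fixed-point case, I will construct a binormal state as before. The set $K'$ consists of states $\Phi$ with $\Phi(e_N)=1$ and $\Phi(\lambda(a)\rho(b))=\langle E_{\rb}(a)\varphi^{1/2}b,\varphi^{1/2}\rangle$ for the relevant $a,b$. Nonemptiness again comes from the weak Dixmier property: $E_{N'\cap c(M)}$-type averaging by unitaries of $N$, realized on the core via approximately $\varphi$-centralizing unitaries. Then I run the same steps as in Theorem \ref{first part}:
\begin{enumerate}
\item Strong $\sigma$-invariance of $K'$, using \cite[Lemma 4.1]{Ma18}.
\item An approximate eigenstate via \cite[Proposition 3.10]{Ma18}.
\item Localization of $\log\Delta_\varphi$ at $0$ via Lemma \ref{centralizing}, where the maps $\theta_t$ from Lemma \ref{ucp maps}(2) fix the relevant algebra $A$ generated by $\lambda,\rho$ of $\sigma^\varphi$-fixed elements.
\item Lemma \ref{binormal state bicentralizer}, which forces $\langle a\varphi^{1/2}b,\varphi^{1/2}\rangle=\langle E_{\rb}(a)\varphi^{1/2}b,\varphi^{1/2}\rangle$ for $a,b$ centralized. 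This gives $a=E_{\rb}(a)$.
\end{enumerate}

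The main obstacle is step 3. The maps $\theta_t$ act on $\lambda(M)$ through $\beta_{e^t}^\varphi\circ E_{\rB}$, not trivially. So $A$ must be taken inside the $\beta$-fixed part, or I must first average over $\beta$ using the map $\pi$ of Lemma \ref{ucp maps}(3). Reconciling this with the target expectation onto $\rb$ means that the candidate formula must be compatible with $\beta$. This holds because $\beta$ preserves $\rb$ and commutes with $E_{\rb}$. If this route fails, I would instead invoke Theorem \ref{second part}: decompose $x$ spectrally under $\beta^\varphi$, handle each $\beta$-eigencomponent with Theorem \ref{second part}, and only need ergodicity-type control as in Corollary \ref{criterion fixed point}.
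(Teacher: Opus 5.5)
Your final step, feeding a suitable state into Lemma~\ref{binormal state bicentralizer}, is also the paper's final step. However, the proposal has a genuine gap: nothing in it produces the expectation onto $\rb(N\subset M,\varphi)$.

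\emph{Nonemptiness of $K'$.} You assert this rather than prove it, and it carries the whole content of the theorem. Once localized at $\log\Delta_\varphi=0$, Lemma~\ref{binormal state bicentralizer} turns it directly into the conclusion. If it held for all $a,b\in M$, it would even give $\rB=\rb$, which contains Connes' open bicentralizer conjecture. Your justification does not supply it. Averaging over $\cU(N)$ yields $E_{N'\cap M}$ (the weak Dixmier property). Averaging over approximately $\varphi$-centralizing unitaries yields $E_{\rB(N\subset M,\varphi)}$ (Lemma~\ref{ucp maps}(1)). Neither yields $E_{\rb}$, and the elements $\varphi^{\ri s}$ of the core are not implemented by unitaries of $N$.

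\emph{Step 3.} This fails for a structural reason. Since $\theta_t\circ\lambda=\lambda\circ\beta^\varphi_{e^t}\circ E_{\rB}$, the only $\lambda(a)$ fixed by all $\theta_t$ have $a\in\rB^{\beta^\varphi}=N'\cap M$ (Theorem~\ref{first part}). That is exactly where $E_{\rb}$ is the identity. So no information about $\rB\cap M_\varphi$ survives Lemma~\ref{centralizing}, or composition with $\pi$. The $J$-conjugated maps $\theta'_t$ have the same defect on the $\rho$ side. The fact that $\beta^\varphi$ commutes with $E_{\rb}$ does not help, because the values $\Phi(\lambda(\beta^\varphi_{e^{t_i}}(a))\rho(b))$ move with the uncontrolled shifts $t_i$.

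\emph{The fallback.} This is circular. By Theorem~\ref{second part}, $\beta^\varphi$-eigenvectors in $\rB$ lie in $\rb$, and $\rb$ is generated by such eigenvectors. So ``$x$ decomposes into $\beta^\varphi$-eigencomponents'' is equivalent to $x\in\rb$. Since $\R^*_+$ is not compact, there is no a priori decomposition of this kind.

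\emph{Two further problems.} First, the statement makes no type $\III_1$ assumption, whereas Lemma~\ref{ucp maps}(2) needs one. Second, your tensor reduction fails as stated: $x\otimes w\notin\rB(N\ovt R_\infty\subset M\ovt R_\infty,\varphi\otimes\psi)$. Sequences $1\otimes b_n$ with $(b_n)$ asymptotically $\psi$-centralizing do not asymptotically commute with the nonscalar $w$, because $R_\infty$ has trivial bicentralizer.

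\emph{The missing idea.} You need to import the relative bicentralizer conjecture from an inclusion where it is already known. The paper fixes $t\neq0$ and treats $\rB(N\subset M,\varphi)^{\sigma^\varphi_t}$ directly. Every eigenvector of $\sigma^\varphi$ is fixed by some $\sigma^\varphi_t$ with $t\neq0$ (any $t$ if it is centralized), so no reduction to $\rB\cap M_\varphi$ is needed.
\begin{itemize}
\item Pass to $\widetilde N=N\rtimes_{\sigma^\varphi_t}\Z\subset\widetilde M=M\rtimes_{\sigma^\varphi_t}\Z$. There $\sigma^{\widetilde\varphi}_t$ is inner on $\widetilde N$, so $\widetilde N$ has no type $\III_1$ summand. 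Hence $\rB(\widetilde N\subset\widetilde M,\widetilde\varphi)=\rb(\widetilde N\subset\widetilde M,\widetilde\varphi)$ by \cite[Theorem E.(2)]{Ma25}.
\item \cite[Lemma 5.10 and Proposition 7.15.(2)]{Ma25} provide a state $\Phi_0$ with $\Phi_0(e_{\widetilde N})=1$, $\Phi_0(f(\log\Delta_{\widetilde\varphi}))=f(0)$, and $\Phi_0(\widetilde\lambda(a)\widetilde\rho(b))=\langle E_{\rB(\widetilde N\subset\widetilde M,\widetilde\varphi)}(a)\widetilde\varphi^{1/2}b,\widetilde\varphi^{1/2}\rangle$. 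So the localization comes for free, and no approximate eigenstate argument is used.
\item Under $\rL^2(\widetilde M)=\rL^2(M)\otimes\ell^2(\Z)$, one has $\widetilde\lambda(a)=\lambda(a)\otimes1$, $e_{\widetilde N}=e_N\otimes1$, $\Delta_{\widetilde\varphi}=\Delta_\varphi\otimes1$, and $\widetilde\rho(b)=\rho(b)\otimes1$ for $b\in M^{\sigma^\varphi_t}$. This is where $\sigma^\varphi_t$-invariance enters.
\item Restrict $\Phi_0$ to $\B(\rL^2(M))\otimes1$ and compose with a ucp map in the closed convex hull of $\{\Ad(\Delta_\varphi^{\ri nt})\}_{n\in\Z}$. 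This gives a state satisfying Lemma~\ref{binormal state bicentralizer}, hence $\rB(N\subset M,\varphi)^{\sigma^\varphi_t}\subset\rb(\widetilde N\subset\widetilde M,\widetilde\varphi)$.
\item \cite[Proposition 6.10.3]{Ma25} then brings this back into $\rb(N\subset M,\varphi)$. No type $\III_1$ hypothesis on $N$ is required.
\end{itemize}
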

\begin{proof}
Fix $t \in \R \setminus \{0\}$. Let $\widetilde{N} = N \rtimes_{\sigma_t^\varphi} \Z$ and $\widetilde{M}=M \rtimes_{\sigma_t^\varphi} \Z$. Let $\widetilde{\varphi}$ be the natural extension of $\varphi$ to $\widetilde{M}$. 

Our goal is to show that \begin{equation} \label{key inclusion}
\rB(N \subset M,\varphi)^{\sigma_t^\varphi} \subset \rB(\widetilde{N} \subset \widetilde{M},\widetilde{\varphi}).
\end{equation}
The theorem follows easily from this inclusion. Indeed,since $\widetilde{N}$ has no type $\III_1$ summand, by \cite[Theorem E.(2)]{Ma25}, it satisfies the relative bicentralizer conjecture 
$$ \rB(\widetilde{N} \subset \widetilde{M},\widetilde{\varphi}) = \rb(\widetilde{N} \subset \widetilde{M},\widetilde{\varphi}).$$
Thus, if the inclusion (\ref{key inclusion}) holds, we can use \cite[Proposition 6.10.3]{Ma25} to conclude that
$$ \rB(N \subset M,\varphi)^{\sigma_t^\varphi}  \subset  \rb(\widetilde{N} \subset \widetilde{M},\widetilde{\varphi}) \cap M \subset \rb( N \subset \widetilde{M}, \varphi) \cap M = \rb(N \subset M, \varphi).$$

Now, let us prove the inclusion (\ref{key inclusion}). 

Let $\widetilde{\lambda}, \widetilde{\rho} : \widetilde{M} \rightarrow \B(\rL^2(\widetilde{M}))$ denote the left and right action of $\widetilde{M}$ on its standard form. Let $e_{\widetilde{N}} \in \B(\rL^2(\widetilde{M}))$ denote the Jones projection associated to the $\widetilde{\varphi}$-preserving conditional expectation $E_{\widetilde{N}} : \widetilde{M} \rightarrow \widetilde{N}$. By \cite[Lemma 5.10 and Proposition 7.15.(2)]{Ma25} there exists a state $\Phi_0 \in \B(\rL^2(\widetilde{M}))^*$ such that 
 \begin{itemize}

 \item $\Phi_0(e_{\widetilde{N}})=1$.
 \item $\Phi_0(f(\log\Delta_{\widetilde{\varphi}}))=f(0)$ for every $f \in C_0(\R)$.
  \item $\Phi_0(\widetilde{\lambda}(a)\widetilde{\rho}(b))=\langle E_{\rB(\widetilde{N} \subset \widetilde{M},\widetilde{\varphi})}(a) \widetilde{\varphi}^{1/2} b, \widetilde{\varphi}^{1/2} \rangle$ for all $a,b \in \widetilde{M}$.
\end{itemize}
Now, since $\widetilde{M} = M \rtimes_{\sigma_t^\varphi} \Z$, we can make an identification $$\rL^2(\widetilde{M}) = \rL^2(M) \otimes \ell^2(\Z)$$
in such a way that
\begin{itemize}
\item $\widetilde{\lambda}(a)=\lambda(a) \otimes 1$ for every $a \in M$.
\item $\widetilde{\rho}(a)=\sum_{n \in \Z} \rho(\sigma_{nt}^\varphi(a)) \otimes p_n$ for every $a \in M$, where $p_n$ are the canonical rank one projections on $\ell^2(\Z)$. In particular, $\widetilde{\rho}(a)=\rho(a) \otimes 1$ when $a \in M^{\sigma_t^\varphi}$.
\item  $e_{\widetilde{N}} = e_N \otimes 1$
\item $\Delta_{\widetilde{\varphi}} = \Delta_\varphi \otimes 1$. 
\end{itemize}
Therefore, if we define a state $\Phi_1 \in \B(\rL^2(M))^*$ by letting $\Phi_1(T)=\Phi_0(T \otimes 1)$, then we get 
 \begin{itemize}

 \item $\Phi_1(e_{N})=\Phi_0(e_{\widetilde{N}})=1$.
 \item $\Phi_1(f(\log\Delta_{\varphi}))=\Phi_0(f(\log\Delta_{\widetilde{\varphi}}))=f(0)$ for every $f \in C_0(\R)$.
  \item $\Phi_1(\lambda(a) \rho(b))=\Phi_0(\widetilde{\lambda}(a) \widetilde{\rho}(b))=\langle E_{\rB(\widetilde{N} \subset \widetilde{M},\widetilde{\varphi})}(a) \widetilde{\varphi}^{1/2} b, \widetilde{\varphi}^{1/2} \rangle$ for all $a,b \in M^{\sigma_t^\varphi}$.
\end{itemize}
Now, since $\Z$ is amenable, we can find $\theta \in \UCP(\B(\rL^2(M))$ in the closed convex hull of $\{ \Ad(\Delta_\varphi^{\ri t})^n \mid n \in \Z \}$ such that 
\begin{itemize}
\item $\theta \circ \lambda = \lambda \circ E_{M^{\sigma_t^\varphi}}$.
\item $\theta \circ \rho = \rho \circ E_{M^{\sigma_t^\varphi}}$.
\item $\theta(e_N)=e_N$
\item $\theta(f(\log\Delta_{\varphi}))=f(\log\Delta_{\varphi})$ for every $f \in C_0(\R)$.
\end{itemize}
Let $\Phi_2=\Phi_1 \circ \theta$. Then we have
\begin{itemize}
 \item $\Phi_2(e_{N})=\Phi_1(e_{N})=1$.
 \item $\Phi_2(f(\log\Delta_{\varphi}))=\Phi_1(f(\log\Delta_{\varphi}))=f(0)$ for every $f \in C_0(\R)$.
\item $\Phi_2(\lambda(a) \rho(b))=\Phi_1(\lambda(a) \rho(b))=\langle E_{\rB(\widetilde{N} \subset \widetilde{M},\widetilde{\varphi})}(a) \widetilde{\varphi}^{1/2} b, \widetilde{\varphi}^{1/2} \rangle$ for all $a,b \in M^{\sigma_t^\varphi}$.
\item $\Phi_2 \circ \lambda = \Phi_1 \circ \lambda \circ E_{M^{\sigma_t^\varphi}} = \varphi$
\item $\Phi_2 \circ \rho = \Phi_1 \circ \rho \circ E_{M^{\sigma_t^\varphi}} = \varphi$
 \end{itemize}
By applying Lemma \ref{binormal state bicentralizer}, we conclude that
$$\langle E_{\rB(\widetilde{N} \subset \widetilde{M},\widetilde{\varphi})}(a) \widetilde{\varphi}^{1/2} b, \widetilde{\varphi}^{1/2} \rangle =\langle a \varphi^{1/2} b , \varphi^{1/2} \rangle$$ for all $a,b \in \rB(N \subset M, \varphi)^{\sigma_t^\varphi}$. This means that 
$$ \rB(N \subset M, \varphi)^{\sigma_t^\varphi} \subset \rB(\widetilde{N} \subset \widetilde{M},\widetilde{\varphi})$$
as we wanted.
\end{proof}

\begin{prop}
Let $N \subset M$ be an irreducible subfactor with expectation where $N$ is of type $\III_1$. Take a faithful state $\varphi \in M_*$ such that $N$ is globally invariant under $\sigma^\varphi$. Then $c(N)' \cap c(M)=\C$ if and only if $\rb(N \subset M,\varphi)^{\sigma^\varphi} =\C$.
\end{prop}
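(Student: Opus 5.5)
The plan is to pass through the continuous core and compare both algebras with $N' \cap c(M)$. Write $c(M) = M \rtimes_{\sigma^\varphi} \R$, generated by $M$ and the unitaries $\varphi^{\ri s}$, $s \in \R$, with $\varphi^{\ri s} a \varphi^{-\ri s} = \sigma_s^\varphi(a)$. Since $N$ is $\sigma^\varphi$-invariant, $c(N) \subset c(M)$ is generated by $N$ and the same unitaries $\varphi^{\ri s}$. Hence
$$c(N)' \cap c(M) = (N' \cap c(M)) \cap \{\varphi^{\ri s} \mid s \in \R\}' = (N' \cap c(M))^{\Ad(\varphi^{\ri \cdot})}.$$
On $c(M)$ the flow $\Ad(\varphi^{\ri s})$ extends $\sigma^\varphi$ and commutes with the dual action $\widehat{\sigma} : \R \curvearrowright c(M)$. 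Both $N' \cap c(M)$ and $c(N)' \cap c(M)$ are globally invariant under $\widehat{\sigma}$, since $\widehat{\sigma}$ fixes $N$ and only rescales the $\varphi^{\ri s}$.

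\emph{First direction: $\rb(N \subset M,\varphi)^{\sigma^\varphi}=\C \Rightarrow c(N)'\cap c(M)=\C$.}

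1. Take $z \in c(N)' \cap c(M)$. Decompose $z$ spectrally under $\widehat{\sigma}$: the nonzero $\widehat\sigma$-eigenvectors, or more generally spectral pieces cut down by $L^1$-averages of $\widehat{\sigma}$, remain in $c(N)' \cap c(M)$.

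2. Identify an eigenvector of $\widehat{\sigma}$ with eigencharacter $t$ with an element of the form $\varphi^{-\ri t} x$, where $x \in M$. The relation $\varphi^{-\ri t}x \in N' \cap c(M)$ says exactly that $xy=\sigma_t^\varphi(y)x$ for all $y \in N$, as in the proof of Theorem \ref{second part}. In particular $x \in \rb(N \subset M,\varphi)$.

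3. The extra condition that $\varphi^{-\ri t}x$ commutes with all $\varphi^{\ri s}$ forces $\sigma_s^\varphi(x)=x$. So $x \in \rb(N\subset M,\varphi)^{\sigma^\varphi}=\C$.

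4. If $x=c \neq 0$ with $t \neq 0$, then $cy=\sigma_t^\varphi(y)c$ for all $y\in N$, i.e. $\sigma_t^\varphi=\mathrm{id}$ on $N$. This contradicts $N$ being of type $\III_1$. So only the character $t=0$ survives and $z \in N' \cap M = \C$ by irreducibility.

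The delicate point in this direction is that $\widehat{\sigma}$ is an $\R$-action, so there may be no genuine eigenvectors. I would first try to show that $c(N)'\cap c(M)$ is spanned weakly by such eigenvectors. For this I would use that $N'\cap c(M)$ is described in \cite{Ma25} as generated by $\{\varphi^{-\ri t}x\}$ with $x$ as in Theorem \ref{second part}, so that $\rb$ is precisely the algebra built from these. Combined with the countability of $T(N\subset M)$, this should give a discrete decomposition.

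\emph{Converse direction: $c(N)'\cap c(M)=\C \Rightarrow \rb(N\subset M,\varphi)^{\sigma^\varphi}=\C$.}

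1. Let $x \in \rb(N \subset M,\varphi)^{\sigma^\varphi}$. By the definition of $\rb$ (and Theorem \ref{second part}), $\rb$ is generated by elements $x_t$ satisfying $x_t y=\sigma_t^\varphi(y)x_t$ for all $y\in N$.

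2. Average $x$ against $\sigma^\varphi$-eigen-decompositions and against the $\beta^\varphi$-flow. This should reduce to the case where $x$ is itself such an $x_t$ and is $\sigma^\varphi$-fixed.

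3. Then $\varphi^{-\ri t}x$ commutes with $N$ and with all $\varphi^{\ri s}$. Hence it lies in $c(N)'\cap c(M)=\C$, so $x \in \C\varphi^{\ri t}\cap M$.

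4. This forces $t=0$ and $x\in\C$.

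\emph{Main obstacle.} I expect the hardest step to be the precise correspondence between $\rb(N\subset M,\varphi)$ (resp. its $\sigma^\varphi$-fixed part) and the $\widehat\sigma$-spectral decomposition of $N'\cap c(M)$ (resp. $c(N)'\cap c(M)$). This requires importing the structural description of $\rb$ from \cite{Ma25}, together with the type $\III_1$ assumption to exclude nonzero $t$ with scalar $x_t$.
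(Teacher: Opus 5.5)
\textbf{There is a genuine gap: the reduction to eigenvectors.} Your eigenvector computations are correct: a $\widehat{\sigma}$-eigenvector in $c(N)'\cap c(M)$ is exactly $\varphi^{-\ri t}x$ with $x$ a $\sigma^\varphi$-fixed element satisfying $xy=\sigma_t^\varphi(y)x$. The problem is that both of your directions rest on $c(N)'\cap c(M)$, resp.\ $\rb(N\subset M,\varphi)$, being weakly spanned by such eigenvectors. This is false in general. Countability of $T(N\subset M)$ only bounds the point spectrum, and it also needs a separable predual, which is not assumed here.

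Here is an example with continuous spectrum. Write a type $\III_1$ factor as $N=N_\psi\rtimes_\theta\R$ with $\psi$ dominant. Take a weakly mixing probability measure preserving flow $\gamma$ on $(X,\mu)$. Set $M=(N_\psi\ovt\rL^\infty(X,\mu))\rtimes_{\theta\otimes\gamma}\R\supset N$, with the expectation induced by $\mathrm{id}\otimes\mu$.

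\begin{itemize}
\item Embed $M$ diagonally in $N\ovt(\rL^\infty(X,\mu)\rtimes_\gamma\R)$. Using $N_\psi'\cap N=\cZ(N_\psi)=\C$, one gets $N_\psi'\cap M=\rL^\infty(X,\mu)$. Hence $N'\cap M=\rL^\infty(X,\mu)^\gamma=\C$, so the inclusion is irreducible.
\item A twisted intertwiner for $t\neq 0$ would be a $\gamma$-eigenfunction. So $T(N\subset M)=\{0\}$, and your analysis correctly shows that every $\widehat\sigma$-eigenvector in $c(N)'\cap c(M)$ is scalar.
\item Nevertheless, Takesaki duality gives $c(N)'\cap c(M)\cong N_\psi'\cap M_\psi$ with $M_\psi=N_\psi\ovt\rL^\infty(X,\mu)$. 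This equals $\rL^\infty(X,\mu)\neq\C$.
\end{itemize}

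So the implication ``all eigenvectors are scalars $\Rightarrow$ $c(N)'\cap c(M)=\C$'' is false. The hypothesis $\rb(N\subset M,\varphi)^{\sigma^\varphi}=\C$ has to be applied to elements of $\rb$ that are not eigenvectors, and your argument never does this. The same example refutes step 1 of your converse: there $\rb(N\subset M,\varphi)$ is diffuse, but the only twisted intertwiners are scalars.

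\textbf{The missing ingredient} is the structural description of $\rb$ from \cite{Ma25}, which avoids any spectral decomposition.
\begin{itemize}
\item Take a dominant weight $\psi$ on $N$ and extend it to $M$ through the expectation.
\item By \cite[Theorems 6.13 and 6.20]{Ma25}, there is an isomorphism $\rb(N\subset M,\varphi)\to\rb(N\subset M,\psi)=N_\psi'\cap M$ intertwining $\sigma^\varphi$ with $\sigma^\psi$.
\item This isomorphism carries $\rb(N\subset M,\varphi)^{\sigma^\varphi}$ onto $(N_\psi'\cap M)^{\sigma^\psi}=N_\psi'\cap M_\psi$.
\item Takesaki duality identifies the inclusion $c(N)\subset c(M)$ with $N_\psi\subset M_\psi$.
\end{itemize}
Hence $c(N)'\cap c(M)\cong\rb(N\subset M,\varphi)^{\sigma^\varphi}$ as von Neumann algebras, which is stronger than the stated equivalence.
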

\begin{proof}
Take $\psi$ a dominant weight on $N$ and extend it to $M$ by using the unique faithful normal conditional expectation onto $N$. Then, by \cite[Theorem 6.20]{Ma25} and \cite[Theorem 6.13]{Ma25}, we have an isomorphism  
$$\rb^{\psi,\varphi} :  \rb(N \subset M,\varphi) \rightarrow \rb(N \subset M,\psi)=N_\psi' \cap M$$ 
and this isomorphism is equivariant with respect to the modular flow $\sigma^\varphi$ and $\sigma^\psi$. Thus $\rb^{\psi,\varphi}$ sends $\rb(N \subset M,\varphi)^{\sigma^\varphi}$ onto $(N_\psi' \cap M)^{\sigma^\psi}=N_\psi' \cap M_\psi'$.

The conclusion follows from the fact that the inclusion $c(N) \subset c(M)$ is isomorphic to $N_\psi \subset M_\psi$ by Takesaki's duality.
\end{proof}

Theorem \ref{ergodic flow inclusion} is a direct consequence of Theorem \ref{eigenvectors modular flow} and Theorem \cite[Theorem 4.3]{MV23}.

\bibliographystyle{plain}

\end{document}